\documentclass[12pt,reqno]{amsart}
\usepackage{epsfig,listings}
\usepackage{verbatim}
\usepackage{amssymb,amsmath}
\usepackage{amsfonts,mathrsfs}
\usepackage{amsbsy}
\usepackage{graphicx}
\usepackage{subfigure}
\usepackage{caption}
\usepackage[usenames]{color} %\color{Red}

\def\type{\mathop{\rm Type}\nolimits}

\newtheorem{thm}{Theorem}[section]
\newtheorem{lem}[thm]{Lemma}
\newtheorem{prop}[thm]{Proposition}

\theoremstyle{definition}
\newtheorem{defn}[thm]{Definition}
\newtheorem{exam}{Example}[section]
\newtheorem{rem}[thm]{Remark}

%\numberwithin{equation}{section}

%    Absolute value notation
\setlength{\textwidth}{15.0cm}
\setlength{\textheight}{22.0cm}
\hoffset=-1cm
\pagestyle {plain}
\usepackage{color}
\usepackage{setspace}

%\allowdisplaybreaks[4]%
%
\usepackage [colorlinks, citecolor=blue, anchorcolor=blue, linkcolor=blue ]{hyperref}%%
\usepackage{framed} %
\usepackage{diagbox}%
\usepackage[justification=centering]{caption}

\begin{document}
\title{ Optimal embedding of hypercube into cylinder }

\author{Zhiyi Tang}
\address{
	School of  Computer Science and Technology,
	Beijing Institute of Technology,
	Beijing 100081, P. R. China.}
\email{zytang@bit.edu.cn}
	
\begin{abstract}
	
	We study  the problem of Embedding Wirelength of   $n$-dimensional Hypercube $Q_n$ into Cylinder $C_{2^{n_1}}\times P_{2^{n_2}}$, where $n_1+ n_2=n$, called EWHC.
	We show that such wirelength  corresponding to Gray code embedding is $2^{n_2}(3\cdot 2^{2n_1-3}-2^{n_1-1})
	+2^{n_1} (2^{2n_2-1}-2^{n_2-1})$. 
	In addition, we prove that Gray code embedding  is an optimal strategy of EWHC.\\ \par\

{\bf{ Key words:}}\ hypercube; cylinder; optimal embedding. 
	
\end{abstract}
	
	\maketitle	
	
\section{Introduction}
Graph embedding is important in parrallel algorithm,parrallel computer, or multiprocessor systems.
The hypercube is one of the most popular, versatile and efficient topological structures of interconnection networks \cite{Xu}. 
Several  studies related to the embedding problem have been performed,such as 
 hypercube into  path \cite{H1964}, hypercube into grid \cite{BCHRS,MRRM}, hypercube into complete binary tree \cite{TREE} or  Caterpillar CAT \cite{PM}. 
 Even though there are numerous results and discussions on the wirelength problem, most of them are basically focus on the host graphs without cycles.
As for the host graphs with cycles, the embedding problems are more complex. The major challenge lies in finding a partition of the edge set of the host graph and minimizing the induced total wirelength.

 Manuel et al (\cite{PM}) and Liu et al (\cite{J}) considered some sepcial cases of EWHC, where only four or eight nodes are permitted in each loop of cylinder. The minimum EWHC for general cases has been proposed as a conjecture in \cite{PM}. \cite{RRPR} claimed to prove EWHC conjecture, but \cite{AS2015} pointed out that there are logical flaws in the proof of \cite{RRPR}. Hence the proof of EWHC conjecture remains open\cite{AS2015}.  
Recently, Liu  and Tang (\cite{Liu2021}) give a rigorous proof of the conjecture of circular wirelength of hypercubes.
The present paper develop the method in \cite{Liu2021} and try to minimize EWHC by Gray code embedding.

Let $G$ and $H$ be finite undirected graphs with $n$ vertices.
$V(G)$ and $V(H)$ denote the vertex sets of $G$ and $H$, respectively.
$E(G)$ and $E(H)$ denote the edge sets of $G$ and $H$, respectively.
An embedding (\cite{BCHRS}) $f$ of $G$ into $H$ is defined as follows:

(i). $f$ is a bijective mapping from $V(G)$ to $V(H)$;

(ii). $f$ is a injective mapping from $E(G)$ to $2^{E(H)}$,
which assigns for each edge $e = \{u, v\}\in E(G)$ a  path $P_f(u,v)$ in $H$ between $f(u)$ and $f(v)$.
	%$$\{P_{\eta}(\eta(u),\eta(v))\ :\ P_{\eta}(\eta(u),\eta(v)) \mbox{ is a path in $H$ between $\eta(u)$ and } \eta(v)\}.$$
	
In this paper, we always take $P_f(u,v)$ to be a shortest path from $f(u)$	to $f(v)$ in $H$.
The distance between $f(u)$ and $f(v)$ in $H$,  denoted by $d_H(f(u),f(v))$, is the length of the path $P_f(u,v)$. 	
Based on the embedding $f$, the wirelength of $G$ into $H$ is given by
\begin{equation*}%\label{wl;f}
	WL(G,H;f)=\sum_{\{u,v\}\in E(G)}d_H(f(u),f(v)).
\end{equation*}
In addition, taking over all embeddings $f$, the minimum wirelength of $G$ into $H$ is defined as
	$$WL(G,H)=\min\limits_{f} WL(G,H;f).$$

In the paper, we give an exact formula of minimum EWHC
%minimum wirelength of hypercube $Q_n$ into cylinder $C_{2^{n_1}}\times P_{2^{n_2}}$, where $n_1+ n_2=n$, 
in the following theorem.
\begin{thm}\label{cpthm}
	For any $n\ge 2$, $n_1,\ n_2\ge1,\ n_1+ n_2=n$.
	\begin{equation}\label{cpwl}
		WL(Q_n,C_{2^{n_1}}\times P_{2^{n_2}})=2^{n_2}(3\cdot 2^{2n_1-3}-2^{n_1-1})
		+2^{n_1} (2^{2n_2-1}-2^{n_2-1}).
	\end{equation}
Moreover, Gray code embedding is an optimal embedding of EWHC.
\end{thm}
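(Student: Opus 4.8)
The plan is to exploit the product structure of the host. Because $H=C_{2^{n_1}}\times P_{2^{n_2}}$ is a Cartesian product, the distance splits as $d_H\big((a_1,b_1),(a_2,b_2)\big)=d_C(a_1,a_2)+d_P(b_1,b_2)$, where $d_C$ is the distance in $C_{2^{n_1}}$ and $d_P$ the distance in $P_{2^{n_2}}$. Writing $c=\pi_C\circ f$ and $p=\pi_P\circ f$ for the two coordinate projections of an arbitrary embedding $f$, the wirelength therefore separates cleanly,
\[
WL(Q_n,H;f)=W_C(f)+W_P(f),\qquad
W_C(f)=\!\!\sum_{\{u,v\}\in E(Q_n)}\!\! d_C(c(u),c(v)),\quad
W_P(f)=\!\!\sum_{\{u,v\}\in E(Q_n)}\!\! d_P(p(u),p(v)).
\]
Since $f$ is a bijection onto $V(H)$, the map $c$ is a balanced $2^{n_2}$-to-one surjection onto $\mathbb{Z}_{2^{n_1}}$ and $p$ is a balanced $2^{n_1}$-to-one surjection onto $\{0,1,\dots,2^{n_2}-1\}$. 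I would bound the two summands separately and then show that a single embedding, the Gray code embedding, attains both bounds simultaneously.

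For the path term I would use the layer-cut identity $d_P(p(u),p(v))=\sum_{t=1}^{2^{n_2}-1}\mathbf 1[\,p(u)<t\le p(v)\ \text{or}\ p(v)<t\le p(u)\,]$, which gives
\[
W_P(f)=\sum_{t=1}^{2^{n_2}-1}\big|E_{Q_n}(A_t,\overline{A_t})\big|,\qquad A_t=p^{-1}(\{0,\dots,t-1\}),\ \ |A_t|=t\cdot 2^{n_1}.
\]
Each cut $\big|E_{Q_n}(A_t,\overline{A_t})\big|$ is at least the minimum edge boundary of a vertex set of size $t\cdot2^{n_1}$ in $Q_n$. Here I would invoke the edge-isoperimetric inequality of the hypercube together with its nestedness: initial segments of the binary order are simultaneously optimal for every cardinality, so all the cuts can be minimized at once, and summing the optimal boundaries over $t$ evaluates to $2^{n_1}\!\left(2^{2n_2-1}-2^{n_2-1}\right)$, the second term of \eqref{cpwl}.

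The cycle term is the heart of the matter, and the main obstacle. For a cycle a single edge deletion does not disconnect, so the clean layer-cut identity above is unavailable; moreover the naive bisection bound (summing over the $2^{n_1-1}$ antipodal double-cuts, each of which splits $Q_n$ into halves of size $2^{n-1}$ and contributes the bisection width $2^{n-1}$) falls short of the target by a constant factor. What is needed is the full circular edge-isoperimetric estimate: a weighted sum, over all arcs of every width $\ell\le 2^{n_1-1}$, of the boundaries $|E_{Q_n}(c^{-1}(\text{arc}),\cdot)|$, each controlled by the isoperimetric profile of $Q_n$ at size $\ell\cdot 2^{n_2}$. This is precisely the circular-wirelength machinery developed in \cite{Liu2021}; I would adapt it from the case of a bijection $Q_{n_1}\to C_{2^{n_1}}$ to the present balanced $2^{n_2}$-to-one map $c$, so that the per-level circular wirelength of $Q_{n_1}$ is reproduced with multiplicity $2^{n_2}$. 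Verifying that the weighted isoperimetric sum indeed collapses to $2^{n_2}\!\left(3\cdot2^{2n_1-3}-2^{n_1-1}\right)$, the first term of \eqref{cpwl}, is where the delicate bookkeeping lies.

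Finally, for the upper bound and optimality I would analyze the Gray code embedding directly. It is the product of a cyclic Gray code $Q_{n_1}\to C_{2^{n_1}}$ and a reflected Gray code $Q_{n_2}\to P_{2^{n_2}}$, so the coordinate projections decouple: edges along the first $n_1$ directions contribute only to $W_C$ and edges along the last $n_2$ directions only to $W_P$. Thus $W_C$ equals $2^{n_2}$ times the (known optimal) circular wirelength of $Q_{n_1}$ into $C_{2^{n_1}}$ and $W_P$ equals $2^{n_1}$ times the (known optimal) linear wirelength of $Q_{n_2}$ into $P_{2^{n_2}}$, matching both lower bounds exactly. Since the lower bounds on $W_C$ and $W_P$ hold for every embedding while the Gray code embedding meets them simultaneously, equality \eqref{cpwl} follows and the Gray code embedding is optimal.
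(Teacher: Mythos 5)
Your decomposition of the wirelength into a cycle part and a path part is, in substance, the same as the paper's: the paper partitions $E(C_{2^{n_1}}\times P_{2^{n_2}})$ into the antipodal double-cuts $X_i$ (which capture exactly your $W_C$) and the column cuts $Y_j$ (your $W_P$), and your treatment of the path term and of the Gray code upper bound matches the paper's Sections~3--4. The problem is the cycle-term lower bound, which is the actual content of the paper, and which your proposal does not prove. You write that you would ``adapt the circular-wirelength machinery of \cite{Liu2021} from a bijection $Q_{n_1}\to C_{2^{n_1}}$ to the balanced $2^{n_2}$-to-one map $c$, so that the per-level circular wirelength is reproduced with multiplicity $2^{n_2}$.'' That adaptation is not routine, and deferring to it leaves the proof with a genuine gap. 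The obstruction is this: in the bijective case the half-arc preimages have a type sequence that changes by at most $1$ from one arc to the next, and the argument of \cite{Guu,Liu2021} leans hard on that unit-step continuity. When $c$ is $2^{n_2}$-to-one, the relevant sets $f^{-1}(A_i)$ still have size $2^{n-1}$, but consecutive types can now jump by as much as $2^{n_2}$ (the paper's condition (C1)), and the threshold in the ``at least two large types'' condition degrades to $2^{n-3}-2^{n_2-1}$ (condition (C2)). One cannot simply quote the circular result per level; the paper has to build a reduction (Proposition~\ref{lemodd}, Lemmas~\ref{recurrenceE(k)}--\ref{sometou}, Lemma~\ref{halfseq}) that replaces the type sequence by an even one without increasing $\sum_i\theta(n,2^{n-1},s_i)$, halves it, and inducts on $n_2$ down to the bijective case --- and the parity step itself needs the nontrivial identity $2\theta(n,2^{n-1},2k+1)=\theta(n,2^{n-1},2k)+\theta(n,2^{n-1},2k+2)+4$ plus a delicate case analysis at $n_2=1$ (property $(\mathcal{U})$). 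None of this is supplied or even sketched in your proposal.

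A secondary point: the alternative you float for the cycle term --- a weighted isoperimetric sum over arcs of \emph{every} width $\ell\le 2^{n_1-1}$ --- is not what \cite{Liu2021} does (it uses only half-width arcs together with their types), and it is not clear such a weighted-arc inequality closes the factor-$3/2$ gap you correctly identify in the naive bisection bound; if you want to pursue that route you would need to exhibit the weights and prove the resulting inequality, which is a separate project. As it stands, your write-up establishes the upper bound and the path-term lower bound, but the cycle-term lower bound is asserted rather than proved.
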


If $n_1=0,n_2=n$, then the cylinder degenerates into a path, which is solved by \cite{H}.
 If $n_1=n,n_2=0$, then the cylinder degenerates into a cycle, which is solved by  \cite{Guu,Liu2021}, the theorem still holds. 
 
Manuel et al (\cite{PM}) prove \eqref{cpwl} in the case of embedding $n_1=2$, where either lexicographical order or Gray code embedding minimizes the wirelength.
Liu et al (\cite{J}) show \eqref{cpwl} in the case of embedding $n_1=3$, where Gray code embedding minimizes the wirelength. Our work is to devote to  the general cases.

The rest of this paper is organized as follows. Section 2 starts with two equivalent ways to compute the wirelength of an embedding, ECP and EIP, which are rather straightforward in Lemma \ref{wl;theta}.
In Section 3, we propose a partition of the edge set of cylinder and the derivate wirelength formula \eqref{wlsum} is obtained corresponding to this partition with EIP. 
In Section 4, we define Gray code embedding of hypercube into a cylinder and  compute the exact  wirelength \eqref{wlgray} based on Gray code embedding.
In Section 5, we show that Gray code embedding is an optimal strategy of EWHC, which is turned into Theorem \ref{cpthm2} and Theorem \ref{cpthm1}.
Concluding remarks are given in Section 6.

\section{Preliminaries}	
Computations of the embedding wirelength between different graphs have been widely analyzed. In this section, we  list two major approaches: edge congestion problem(ECP) and edge isoperimetric problem(EIP), and focus on EIP for hypercubes later.

\subsection{Edge congestion problem}\label{ecp}\

ECP is one of the most popular tools studied in the computation of wirelength of different embeddings. With ECP, one could treat the wirelength of an embedding as a  problem of  some congestion summation through all  edges of $H$(such as \cite{BCHRS,J,MRRM,PM}). 
Here we only state some necessary definitions and notations, more results about ECP are referred to \cite{BCHRS,MRRM}.
\begin{defn}[Def.2 of \cite{TCT}]
	The {\it edge congestion} of an edge $e'\in E(H)$ when using an embedding $f$, denoted by $EC_f(e')$, is defined as the number of $e=\{u,v\}\in E(G)$ such that $e'$ is incident on $P_f(u,v)$. In other words,
	$$EC_f(e')=\#\{\{u,v\}\in E(G): e'\in P_f(u,v)\}.$$
\end{defn}
This can be extended any edge subset $Y\subset E(H)$, define
$$EC_f(Y)=\sum_{e'\in Y}EC_f(e').$$
For an embedding $f$, we define an index function $\delta(e',P_f(u,v))$ to be one if $e'\in P_f(u,v)$ and zero  otherwise for each $\{u,v\}\in E(G),\ e'\in E(H)$. Then
$EC_f(e')$, $d_H(f(u),f(v))$ are equivalent to $\sum_{\{u,v\}\in E(G)} \delta(e',P_f(u,v))$, 
$\sum_{e'\in E(H)} \delta(e',P_f(u,v))$, respectively.
Consequently, the embedding wirelength of $G$ into $H$ is equivalent to $\sum_{e'\in E(H)}EC_f(e')$. In other words,
$$WL(G,H;f)=\sum_{e'\in E(H)}EC_f(e').$$

Clearly, take any partition $(Y_i)_{i=1}^{m}$ of $E(H)$, then
$$WL(G,H;f)=\sum_{i=1}^{m} EC_f(Y_i).$$

In particular, $G$ is a regular graph. 
Based on  Congestion Lemma of \cite{MRRM}, for each $1\le i \le m$, let  $Y_i$ be an edge cut of $H$ such that the removal of edges of $Y_i$ leaves $H$ into 2 components. One induced vertex set is denoted by $\beta_i$, and let $\alpha_i=f^{-1}(\beta_i)$, edge set $R_i=\{\{u,v\}\in E(G): u\in \alpha_i,v\notin \alpha_i\}$.
Suppose $Y_i$ satisfies the followsing conditions:

\textbf{(A1)}.For each $\{u,v\}\in R_i$, $P_f(u,v)$ has exactly one edge in $Y_i$;

\textbf{(A2)}.For each $\{u,v\}\in E(G)\backslash R_i$, $P_f(u,v)$ has no edge in $Y_i$.

Then 
\begin{equation*}\label{wl;R}
	WL(G,H;f)=\sum_{i=1}^{m} |R_i|.
\end{equation*}
Since conditions (A1)(A2) imply that 
$\sum_{e'\in Y_i} \delta(e',P_f(u,v))$ is one if $\{u,v\}\in R_i$  and zero otherwise for each $\{u,v\}\in E(G)$.
What's more, for each $1\le i \le m$,
$$EC_f(Y_i)=\sum_{e'\in Y_i}\sum_{\{u,v\}\in E(G)} \delta(e',P_f(u,v))=\sum_{\{u,v\}\in R_i}\sum_{e'\in Y_i}\delta(e',P_f(u,v))=|R_i|.$$

It is worth mentioning that \cite{PM}  studied EWHC of the case $n_1=2$ by introducing some partition of edge set $E(C_{2^{2}}\times P_{2^{n_2}})$, then minimized each item with the help of ECP. 
\cite{J} extended to the case $n_1=3$ with ECP. 
But for $n_1>3$, no one  find an appropriate partition to  minimize each item with ECP until now.  The purpose of the paper is aim to present an alternative partition of $E(C_{2^{n_1}}\times P_{2^{n_2}})$, then to apply EIP to solve EWHC for more general cases.

\subsection{Edge isoperimetric problem}\

 For any graph $G$, and $S\subset V(G)$, define $$\theta(S)=\#\{\{u,v\}\in E(G):u\in S, v\notin S \}.$$ 
 
The above results can be summarized in the following lemma.
\begin{lem}\label{wl;theta}
	Let $G$ be a regular graph and $f$ be an embedding of $G$ into $H$.
	Let $(Y_i)_{i=1}^{m}$ be a  partition  of $E(H)$.
	If each $Y_i$ is an edge cut of $H$ such that 
	$Y_i$ disconnects $H$ into 2 components, induced  vertex set  $\beta_i$ and $\beta_i^c$. 
	Let $\alpha_i=f^{-1}(\beta_i)$, edge set $R_i=\{\{u,v\}\in E(G): u\in \alpha_i,v\notin \alpha_i\}$,
	and $Y_i$ satisfies  conditions(A1)(A2). Then
	\begin{equation*}
	WL(G,H;f)=\sum_{i=1}^{m} \theta(\alpha_i).
	\end{equation*}
\end{lem}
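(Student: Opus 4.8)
The plan is to assemble the pieces already laid out in the preceding discussion: the identity $WL(G,H;f)=\sum_{e'\in E(H)}EC_f(e')$, the additivity of edge congestion over a partition of $E(H)$, and the reduction of each $EC_f(Y_i)$ under conditions (A1)(A2). Since the only genuinely new content of the lemma is the replacement of $|R_i|$ by $\theta(\alpha_i)$, the argument is short and proceeds by chaining these observations.

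First I would recall that, by the equivalence established above, $WL(G,H;f)=\sum_{e'\in E(H)}EC_f(e')$. Because $(Y_i)_{i=1}^{m}$ is a partition of $E(H)$, every edge $e'\in E(H)$ lies in exactly one block $Y_i$, so the sum decomposes without overlap or omission into
\begin{equation*}
	WL(G,H;f)=\sum_{i=1}^{m}\sum_{e'\in Y_i}EC_f(e')=\sum_{i=1}^{m}EC_f(Y_i).
\end{equation*}
Next, for a fixed $i$, I would expand $EC_f(Y_i)$ using the index function $\delta$ and interchange the two finite sums,
\begin{equation*}
	EC_f(Y_i)=\sum_{e'\in Y_i}\sum_{\{u,v\}\in E(G)}\delta(e',P_f(u,v))=\sum_{\{u,v\}\in E(G)}\sum_{e'\in Y_i}\delta(e',P_f(u,v)).
\end{equation*}
Conditions (A1)(A2) are exactly what makes the inner sum explicit: for $\{u,v\}\in R_i$ the path $P_f(u,v)$ meets $Y_i$ in precisely one edge, contributing $1$, while for $\{u,v\}\notin R_i$ it meets $Y_i$ in no edge, contributing $0$. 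Hence $EC_f(Y_i)=|R_i|$.

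The final step is the definitional identification. Since $\alpha_i=f^{-1}(\beta_i)$ and $R_i=\{\{u,v\}\in E(G):u\in\alpha_i,\ v\notin\alpha_i\}$, the set $R_i$ is literally the collection of $G$-edges crossing the cut $(\alpha_i,\alpha_i^c)$, which is exactly what $\theta(\alpha_i)$ counts; thus $|R_i|=\theta(\alpha_i)$. Summing over $i$ yields $WL(G,H;f)=\sum_{i=1}^{m}\theta(\alpha_i)$.

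I do not expect a real obstacle here: every ingredient was derived before the statement, and the lemma is essentially a repackaging. The only points demanding care are verifying that the partition property of $(Y_i)$ legitimately turns a sum over $E(H)$ into a sum over blocks (no edge double-counted, none missed), and checking that (A1)(A2) are applied to the correct cut so that the per-edge contribution is genuinely the indicator of membership in $R_i$. The regularity of $G$ invoked in the hypothesis is inherited from the Congestion Lemma framework but is not actually consumed by this particular computation; it matters only downstream, when the individual terms $\theta(\alpha_i)$ are bounded via the edge-isoperimetric inequality for $Q_n$.
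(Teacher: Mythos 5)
Your proposal is correct and follows exactly the route the paper itself takes: the paper presents this lemma as a summary of the Section 2.1 computation ($WL=\sum_{e'}EC_f(e')$, additivity over the partition, $EC_f(Y_i)=|R_i|$ via (A1)(A2), and the definitional identity $|R_i|=\theta(\alpha_i)$). Your added remark that the regularity hypothesis is not actually consumed here is a fair observation, but otherwise there is nothing to change.
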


Consider $G$ to be a hypercube of dimension $n$, denoted by $Q_n$. The vertex set of $Q_n$, $V(Q_n)=\{0,1\}^n$, are all $n$-tuples over two letters alphabet $\{0,1\}$.
For any $u,v\in V(Q_n)$, $\{u,v\}\in E(Q_n)$,  if and only if $u,v$ differ in exactly one coordinate.

For any $S\subset V(Q_n)$, \cite{H} denote
$$\theta(n,S)=\#\{ e\in E(Q_n) :\ e=\{v,w\},v\in S,w \notin S  \}.$$
What's more, for each $0\le k\le 2^{n}$, denote
$$\theta(n,k)=\min\{\theta(n,S) :\ S\subset V(Q_n), |S|=k\}.$$
What kinds of set $S\subset V(Q_n)$ with $|S|=k$ satisfies $\theta(n, S)=\theta(n,k)$, is called {\it the  discrete isoperimetric problem (EIP) for hypercubes}.
Harper(\cite{H1964,H}) developed EIP theory and did a series of good jobs on hypercubes embeddings. He proved that
\begin{lem}[\cite{H}]\label{Hcubal}
	For $S \subset V(Q_n)$ with $|S|=k$,	
	$\theta(n, S)=\theta(n,k)$ if and only if $S$ is a cubal. 
\end{lem}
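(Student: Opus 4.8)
The plan is to prove this edge-isoperimetric characterization by induction on $n$, exploiting the product decomposition $Q_n=Q_{n-1}\times K_2$. Splitting $V(Q_n)$ by the value of the last coordinate produces two copies of $V(Q_{n-1})$, and for $S\subset V(Q_n)$ I would write $S=S_0\cup S_1$, where $S_0,S_1\subset V(Q_{n-1})$ collect the projections of the vertices of $S$ whose last coordinate is $0$, respectively $1$. Every edge of $Q_n$ is either internal to one of the two subcubes or is one of the $2^{n-1}$ matching edges joining corresponding vertices, and a matching edge lies in the boundary of $S$ precisely when its projected endpoint belongs to exactly one of $S_0,S_1$. This yields the decomposition
\begin{equation*}
\theta(n,S)=\theta(n-1,S_0)+\theta(n-1,S_1)+|S_0\triangle S_1|,
\end{equation*}
which drives the entire argument.

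For the optimality (``if'') direction I would first bound the last term by $|S_0\triangle S_1|\ge\big||S_0|-|S_1|\big|$ and each of the first two terms by $\theta(n-1,S_j)\ge\theta(n-1,|S_j|)$, reducing the value to the numerical recursion
\begin{equation*}
\theta(n,k)=\min_{k_0+k_1=k}\Big(\theta(n-1,k_0)+\theta(n-1,k_1)+|k_0-k_1|\Big).
\end{equation*}
This bound is attainable because nested cubals of any prescribed sizes exist (for instance initial segments of the lexicographic order): taking $S_1\subseteq S_0$ to be two such segments makes both $\theta(n-1,S_j)$ minimal and forces $|S_0\triangle S_1|=|k_0-k_1|$ simultaneously. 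Solving the recursion then yields the closed form of $\theta(n,k)$ and shows that the cubals --- the sets built recursively from nested sub-cubals along an optimal split --- achieve this value.

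The harder, ``only if'' direction is the characterization that \emph{every} extremal $S$ is a cubal, which I would obtain by tracing equality back through the displayed decomposition. Since $\theta(n,S)$ is a sum of three terms each sitting at its lower bound, extremality forces simultaneously: (a) $\theta(n-1,S_j)=\theta(n-1,|S_j|)$, so by the inductive characterization each $S_j$ is a cubal; (b) $|S_0\triangle S_1|=\big||S_0|-|S_1|\big|$, so $S_0,S_1$ are nested; and (c) the split $(|S_0|,|S_1|)$ realizes the minimum in the recursion. Reassembling (a)--(c) exhibits $S$ as a cubal, closing the induction. The main obstacle I anticipate is precisely this equality analysis: the recursion typically has \emph{several} optimal splits --- at sizes that are powers of two, for example, both a balanced split and an all-in-one-half split are optimal, which is what produces the genuinely distinct extremal sets such as subcubes in different coordinate directions. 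One must therefore determine all optimal splits at each level and set up the recursive definition of ``cubal'' so that it is exactly closed under them, and verify that any set failing to be a cubal makes at least one of (a)--(c) strict. Keeping this bookkeeping aligned with the inductive definition, rather than merely with the value $\theta(n,k)$, is the delicate part of the proof.
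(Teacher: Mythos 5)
First, note that the paper does not prove this lemma at all: it is quoted directly from Harper's book \cite{H} (the surrounding text only records the definition of a cubal and the consequences \eqref{E(k)} and \eqref{theta(k)}), so there is no internal proof to compare yours against; I can only assess your proposal on its own terms. Your skeleton is the standard one and its first half is sound: the decomposition $\theta(n,S)=\theta(n-1,S_0)+\theta(n-1,S_1)+|S_0\triangle S_1|$ is correct, it yields the recursion $\theta(n,k)=\min_{k_0+k_1=k}\bigl(\theta(n-1,k_0)+\theta(n-1,k_1)+|k_0-k_1|\bigr)$, and stacking two nested optimal sets shows the minimum is attained by \emph{some} cubal of each size.

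However, two genuine gaps remain, and the second is essentially the whole content of the lemma. (1) In the ``if'' direction you quietly trade the paper's definition of cubal (a disjoint union of $c_i$-subcubes with the stated attachment conditions, whose edge count is \eqref{E(k)}) for ``sets built recursively from nested sub-cubals along an optimal split''. The lemma asserts that \emph{every} cubal in the former sense is extremal, so you must either compute $|E(S)|$ for an arbitrary cubal and match it to the recursion's value, or prove the two descriptions coincide; neither is done, and assuming it is circular. (2) The ``only if'' reassembly, which you yourself flag as ``the delicate part'', is exactly where the theorem lives. Conditions (a)--(c) only tell you that $S$ is two nested cubals of optimally split sizes stacked along one coordinate; the cubal structure of $S$ need not align with that coordinate (for $k=6$ in $Q_3$ the optimal split $(3,3)$ stacks two $3$-vertex paths, and the $2$-face of the resulting cubal cuts across both layers), and a single $k$ typically admits several optimal splits (for $k=5$ in $Q_4$ all of $(5,0)$, $(4,1)$, $(3,2)$ are optimal). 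Closing the induction therefore requires classifying all optimal splits, characterizing when one cubal is nested in another, and showing that every such assembly is again a cubal while every non-cubal makes one of (a)--(c) strict. That analysis is the hard part of Harper's characterization (see also Guu's thesis) and is absent here, so the proposal is a correct reduction of the problem rather than a proof.
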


\begin{defn}[\cite{H,Liu2021}]
	Take $S \subset V(Q_n)$ with $|S|=k $, where we write
	$$k=\sum_{i=1}^{N} 2^{c_i},\ N\ge 1,\ 0\le c_1<c_2<\cdots<c_N=\left \lfloor  \log_2 k\right \rfloor.$$
	If $S$ is a disjoint union of  $c_i$-subcubes,$i=1,\cdots,N$,
	such that each $c_i$-subcube lies in a neighbor of every $c_j$-subcube for $j<i$, then $S$ is called a {\it $k$-cubal}.
	Letting $E(k)=|E(S)|$ if $S$ is a cubal with $k$ vertices, then
	\begin{equation}\label{E(k)}
	E(k)=\sum_{i=1}^{N}(N-i)2^{c_i}+c_i 2^{c_i-1}.
	\end{equation}
\end{defn}
Consequently,
\begin{equation}\label{theta(k)}
%\theta(n, S)=
\theta(n,k)=n\cdot k-2E(k).
\end{equation}

As a corollary, we have
\begin{equation}\label{recurred}
	\theta(n,k)=\theta(n-1,k)+k,\quad 0\le k\le 2^{n-1},
\end{equation}
and
\begin{equation}\label{double}
\theta(n+1,2k)=2\theta(n,k),\quad 0\le k \le 2^n.
\end{equation}

Next, we introduce another powful indicator,{\it Type}.
\begin{defn}[\cite{Guu,Liu2021}]
	Given any $S\subset V(Q_n)$, define
	$$\type(S):=\min\limits_{H\in\mathscr{H}_n}|S\cap H|,$$
	where $\mathscr{H}_n$ is the set of $2n$ half planes of $Q_n$.
	For any $n>0$, $0\le k\le 2^n$, $t\le k/2$, we denote,
	$$\theta(n,k,t)=\min\{\theta(n,S):\ S\subset V(Q_n),\ |S|=k,\ \type(S)=t\}.$$
	We call a subset $S$ with $|S|=2^{n-1}$ is of small type, if $0\le \type(S)\le 2^{n-3}$.
	Otherwise, we call it is of big type.
\end{defn}
If $S$ is of small type, then we can directly compute its $\theta$ value by formula
\begin{equation}\label{small}
\theta(n,2^{n-1},t)=2\theta(n-2,t)+2^{n-1},\ 0\le t \le 2^{n-3}.
\end{equation}
What's more, for $0\le t\le 2^{n-4}$, by \eqref{small} and \eqref{double}, we have
\begin{equation}\label{double2}
	\theta(n,2^{n-1},2t)=2\theta(n-1,2^{n-2},t).
\end{equation}

If $S$ is of big type, then we can estimate the lower bound of its $\theta$ value by relation 
\begin{equation}\label{big}
\theta(n,2^{n-1},t)\ge \theta(n,2^{n-1},2^{n-3}),\quad 2^{n-3} \le t \le 2^{n-2}.
\end{equation}
The details of \eqref{small} and \eqref{big} are referred to \cite{Liu2021}.

\section{A Partition of $E(C_{2^{n_1}}\times P_{2^{n_2}})$}\

In this section, we present notations of the cylinder, including its vertex set and edge set. Then we apply EIP to  compute EWHC corresponding to a partition of the edge set of cylinder.

 In convention, denote $1-$dimensional path and circle with $n\ge 2$ vertices by $P_n$ and $C_n$, respectively. 
The cylinder $C_{2^{n_1}}\times P_{2^{n_2}}$ is  a Cartesian product of a circle $C_{2^{n_1}}$ with a path $P_{2^{n_2}}$. Also, the cylinder $C_{2^{n_1}}\times P_{2^{n_2}}$ is seen as the grid $P_{2^{n_1}}\times P_{2^{n_2}}$ with a wraparound edge in each column.

It is nature to label the vertex set of $C_{2^{n_1}}\times P_{2^{n_2}}$ with two coordinates, i.e.,
$V=\{(x_1,x_2): 1\le x_i\le 2^{n_i},\ i=1,2\}$.
%and two vertices $x=(x_1,x_2)$ and $y=(y_1,y_2)$ are linked by an edge, if $|x_1-y_1|+|x_2-y_2|=1$.
In a different way, the vertex set of $C_{2^{n_1}}\times P_{2^{n_2}}$ can be  labelled by a series of  nature numbers (one coordinate) in the following arrangement. (see Fig.\ref{c4p8})
$$\begin{array}{lllll}
&1              &2           &\cdots    &2^{n_2}\\
&2\cdot2^{n_2}  &2\cdot2^{n_2}-1      &\cdots    &2^{n_2}+1\\
&\cdots        &\cdots       &\cdots    &\cdots \\
&(2^{n_1}-2)\cdot2^{n_2}+1  &(2^{n_1}-2)\cdot2^{n_2}+2 &\cdots    &(2^{n_1}-1)\cdot2^{n_2}\\
&2^{n_1}\cdot2^{n_2}      &2^{n_1}\cdot2^{n_2}-1      &\cdots    &(2^{n_1}-1)\cdot2^{n_2}+1 
\end{array}$$
\begin{figure}[htbp]
\begin{minipage}[t]{0.45\linewidth}
	\centering
	\includegraphics[width=2.5in]{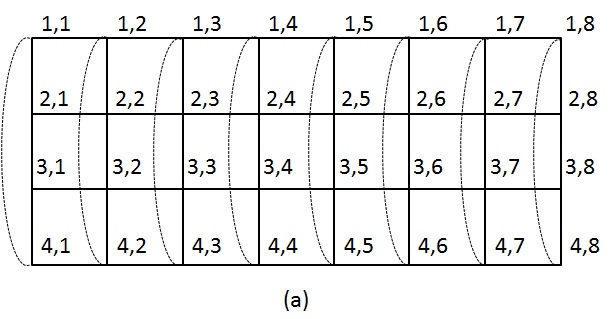}
	%\caption{Hypercube $Q_4$}
\end{minipage}%
\begin{minipage}[t]{0.45\linewidth}
	\centering
	\includegraphics[width=2.5in]{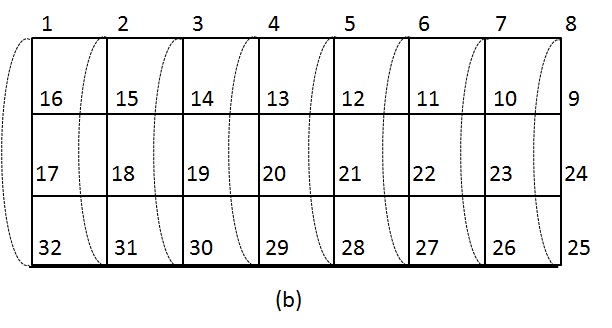}
	%\caption{Circle $C_{16}$}
\end{minipage}%
\caption{cylinder $C_{2^{2}}\times P_{2^{3}}$}
\label{c4p8}
\end{figure}

For any $n\ge 2$, $n_1,\ n_2\ge1,\ n_1+ n_2=n$.
Based on this labelling,  an
embedding of hypercube $Q_n$ into cylinder $C_{2^{n_1}}\times P_{2^{n_2}}$ is a map $f:\{0,1\}^n\rightarrow \{1,2,\cdots,2^n\}$.
For $1\le i \le 2^{n_1},\ 1\le j \le 2^{n_2}$, letting
$$a_{i,j}=\left\{
\begin{array}{cl}
(i-1)2^{n_2}+j,&\mbox{if }\ i\ \ \mbox{is\ odd},\\
i\cdot2^{n_2}+1-j,&\mbox{if }\ i\ \ \mbox{is\ even},\\
\end{array}
\right.$$
then the edge set of $C_{2^{n_1}}\times P_{2^{n_2}}$, 
$$\begin{array}{cl}
E(C_{2^{n_1}}\times P_{2^{n_2}})=&
\{
\{a_{i,j},a_{i,j+1}\}: 1\le i\le 2^{n_1}, 1\le j< 2^{n_2} \} \bigcup \\
&\{
\{a_{i,j},a_{i+1,j}\}: 1\le i< 2^{n_1}, 1\le j\le 2^{n_2} \}
\bigcup \\
&\{
\{a_{2^{n_1},j},a_{1,j}\}: 1\le j\le 2^{n_2}\}.
\end{array}
$$ 

Letting $$X_1=\{
\{a_{2^{n_1},j},a_{1,j}\},
\{a_{2^{n_1-1},j},a_{2^{n_1-1}+1,j} \}
:1\le j\le2^{n_2}\},$$ 
for $1< i\le2^{n_1-1}$,
 $$X_i=\{
 \{a_{i-1,j},a_{i,j}\},
 \{a_{i-1,j}+2^{n-1},a_{i,j}+2^{n-1}\}
 :1\le j\le2^{n_2}\},$$
  and for $1\le j<2^{n_2}$,
 $$Y_j=\{
 \{a_{i,j},a_{i,j+1}\}
 :1\le i\le 2^{n_1}\}.$$
 
Observe that $X_i$ disconnects $C_{2^{n_1}}\times P_{2^{n_2}}$ into two components, where the induced vertex set 
\begin{equation}\label{Ai}
	A_i=\{(i-1)2^{n_2}+k:1\le k\le 2^{n-1}\}, \quad 1\le i \le 2^{n_1-1},
\end{equation} 
and
$Y_j$ disconnects $C_{2^{n_1}}\times P_{2^{n_2}}$ into two components,  where the induced vertex set
\begin{equation}\label{Bj}
	B_j=\{a_{i,k}:1\le i\le 2^{n_1}, 1\le k\le j\},\quad 1\le j < 2^{n_2}.
\end{equation} 
It is direct to check that  $\{X_1,\cdots,X_{2^{n_1-1}},Y_1,\cdots,Y_{2^{n_2}-1}\}$ is a partition of $E(C_{2^{n_1}}\times P_{2^{n_2}})$ and  satisfies conditions(A1) (A2).(See Fig. \ref{c8p4}) 
\begin{figure}[htbp]
	\begin{minipage}[t]{0.45\linewidth}
		\centering
		\includegraphics[width=1.5in]{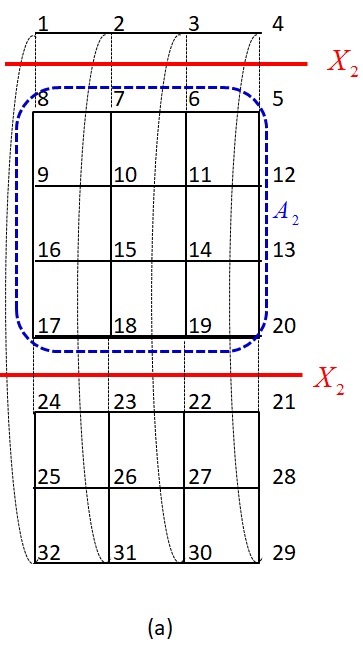}
		%\caption}
	\end{minipage}%
	\begin{minipage}[t]{0.45\linewidth}
		\centering
		\includegraphics[width=1.5in]{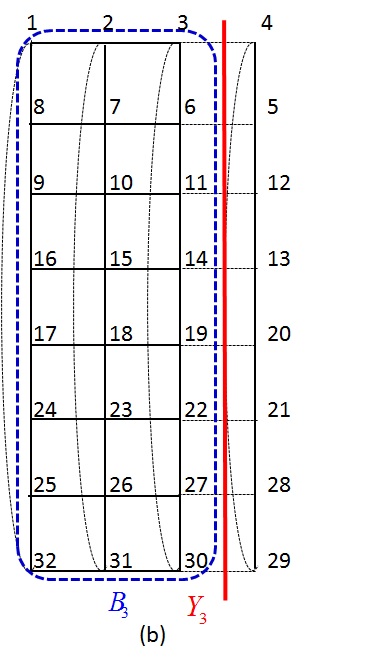}
		%\caption{}
	\end{minipage}%
	\caption{
	  (a) edge cut $X_2$ disconnects $C_{2^3}\times P_{2^2}$ into two components,where the induced vertex set is $A_2$.
	  (b) edge cut $Y_3$ disconnects $C_{2^3}\times P_{2^2}$ into two components,where the induced vertex set is $B_3$.
		}
	\label{c8p4}
\end{figure}

In the rest of paper, let $f$ be an
 embedding of hypercube $Q_n$ into cylinder $C_{2^{n_1}}\times P_{2^{n_2}}$ with $n_1+ n_2=n$, $A_i$ and $B_j$ are defined in \eqref{Ai} and \eqref{Bj}, respectively. By Lemma \ref{wl;theta}, the wirelength  
\begin{equation}\label{wlsum}
	WL(Q_n,C_{2^{n_1}}\times P_{2^{n_2}};f)=
	\sum_{i=1}^{2^{n_1-1}}\theta(n,f^{-1}(A_i))+\sum_{j=1}^{2^{n_2}-1}\theta(n,f^{-1}(B_j)).
\end{equation}

%%%%%%%%%%%%%%%%%%%%
 
\section{ Gray code embedding of $Q_n$ into $C_{2^{n_1}}\times P_{2^{n_2}}$}\label{sec:gray}\

In this section, we study an embedding of $Q_n$ into $C_{2^{n_1}}\times P_{2^{n_2}}$  that is based on Gray code. 
\begin{defn}[\cite{Liu2021}]
	A {\it Gray code} is an ordering of $2^n$ binary numbers such that only one bit changes from one entry to the next.
	For $n>0$, define a Gray code map $\xi_n:\{0,1\}^n\rightarrow \{1,2,\cdots,2^n\}$.
	Take any  $u=u_1\cdots u_n\in\{0,1\}^n$.
	Let ${u}'={u_1}'\cdots {u_n}'\in\{0,1\}^n$ be defined as, for any $1\le i\le n$,
	${u_i}'=0$, if $u_1+\cdots+u_i$ is even; ${u_i}'=1$, otherwise. Then
	$\xi_n(u)=({u}')_2+1,$
	where $({u}')_2$ is the integer with binary expansion ${u}'$.
	For example: $\xi_5(01101)=(01001)_2+1=9+1=10$.
\end{defn}
	Clearly, $V(Q_n)=\{0,1\}^n$, $V(C_{2^{n_1}}\times P_{2^{n_2}})=\{1,2,\cdots,2^n\}$ for $n_1+ n_2=n$. Thus,
	$\xi_n$ is also identified as Gray code embedding of $Q_n$ into $C_{2^{n_1}}\times P_{2^{n_2}}$, where $n_1+ n_2=n$.

For $1\le i\le 2^{n-1}$, letting 
\begin{equation}\label{Si}
	S_i=\{i,i+1,\cdots,i+2^{n-1}-1\},
\end{equation}
and  
$G_i=\xi_n^{-1}(S_i),\ \type(G_i)=g_i^n$.
Notice that $G_i, \type(G_i)$ are same as Section 2.4 in \cite{Liu2021}. 
Moreover, for $1\le i\le 2^{n-1}$,
\begin{equation*}\label{gi}
	g_i^{n}=\left\{
	\begin{array}{cl}
		i-1,&\mbox{if } 1\le i\le 2^{n-3},\\
		2^{n-2}-(i-1),&\mbox{if } 2^{n-3}<i\le 2^{n-2},\\
		g_{i-2^{n-2}}^{n},&\mbox{if } 2^{n-2}< i\le 2^{n-1}.\\
	\end{array}
	\right.
\end{equation*}
 and $\theta(n,G_i)=\theta(n,2^{n-1},g_i^n)$.
 
Now, consider the embedding of $Q_n$ into $C_{2^{n_1}}\times P_{2^{n_2}}$ for $n_1+n_2=n$.
Observe that for $1\le i \le 2^{n_1-1}$, $A_i=S_{1+2^{n_2}(i-1)}$, and
\begin{equation*}
	\xi_n^{-1}(A_i)=G_{1+2^{n_2}(i-1)},
\end{equation*}
 \begin{equation}\label{graytype}
 	\type(\xi_n^{-1}(A_i))=g_{1+2^{n_2}(i-1)}^n=2^{n_2}g_i^{n_1}.
 \end{equation}
Together with \eqref{double2} repeatedly, we have
\begin{equation}\label{gi-n2}
\theta(n,\xi_n^{-1}(A_i))
=\theta(n,2^{n-1},2^{n_2}g_i^{n_1})
=2^{n_2}\theta(n_1,2^{n_1-1},g_i^{n_1}).
\end{equation}
Moreover, by the  circular wirelength formula\cite{Liu2021}
$$
\sum\limits_{i=1}^{2^{n-1}}\theta(n,2^{n-1},g_i^n)
=3\cdot 2^{2n-3}-2^{n-1}, $$ 
so, we have
\begin{equation}\label{cpwl1}
	\sum_{i=1}^{2^{n_1-1}} \theta(n,\xi_n^{-1}(A_i))=
	2^{n_2}(3\cdot 2^{2n_1-3}-2^{n_1-1}).
\end{equation}
\begin{figure}[htbp]
	\centering
	\includegraphics[width=2.5in]{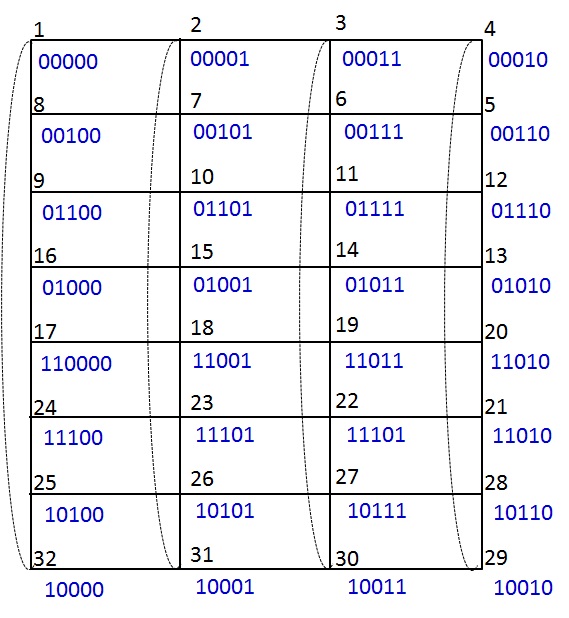}
	\caption{Gray code in $C_{2^{3}}\times P_{2^{2}}$}
	\label{gray}
\end{figure}

In addition, by the recurrent structure of Gray code embedding, \ it is direct to check that for each $1\le j \le 2^{n_2}-1$, 
$\xi_n^{-1}(\{a_{i,j}: 1\le i \le 2^{n_1} \})$ is a $n_1-$dimensional subcube.(see Fig.\ref{gray}).
Lemma 4.2 of \cite{J} shows that for  each $1\le i \le 2^{n_1}$, $1\le j \le 2^{n_2}-1$, 
$\xi_n^{-1}(\{a_{i,k}: 1\le k \le j \})$ is a cubal of $Q_{n}$. 
Notice that for each $1\le j \le 2^{n_2}-1$,  $\xi_n^{-1}(B_{j})$ is a Cartesian product of a subcube of order $n_1$ with a cubal.
So,by Lemma 4.1(ii) of \cite{J}, for each $1\le j \le 2^{n_2}-1$, $\xi_n^{-1}(B_{j})$ is a cubal of $Q_{n}$.  
Hence, by Lemma \ref{Hcubal}, for  $1\le j \le 2^{n_2}-1$, we have 
$$\theta(n,\xi_n^{-1}(B_{j}))
=\theta(n,|B_{j}|)=\theta(n,j\cdot 2^{n_1}).$$
Moreover, by \eqref{double} repeatedly, we have $\theta(n,\xi_n^{-1}(B_{j}))=2^{n_1}\theta(n_2,j )$.
Together with the wirelength formula of hypercube into a path\cite{H1964,H}
$$	\sum\limits_{i=1}^{2^n-1}\theta(n,i)
=2^{2n-1}-2^{n-1},$$
so, we have
\begin{equation}\label{cpwl2}
	\sum_{j=1}^{2^{n_2}-1}\theta(n,\xi_n^{-1}(B_{j}))
	=2^{n_1}(2^{2n_2-1}-2^{n_2-1}).
\end{equation} 

At last, take \eqref{cpwl1} and \eqref{cpwl2} into \eqref{wlsum}, we have the wirelength of Gray code embedding from $Q_n$ into $C_{2^{n_1}}\times P_{2^{n_2}}$.
\begin{equation}\label{wlgray}
	WL(Q_n,C_{2^{n_1}}\times P_{2^{n_2}};\xi_n)=
	2^{n_2}(3\cdot 2^{2n_1-3}-2^{n_1-1})+
	2^{n_1}(2^{2n_2-1}-2^{n_2-1}).
\end{equation}
%%%%%%%%%%%%%%%%%%%%%%%%%%%%%%%%%%%%%%%%%%%%%%%%%%%%%%%%%%%%%%%%%%
\section{Optimal embedding of  $Q_n$ into $C_{n_1}\times P_{n_2}$}\

In this section, we explain that the Gray code embedding  is an optimal strategy of EWHC.
Section \ref{sec:gray} has illustrated that 
 for each $1\le j \le 2^{n_2}-1$,  $\xi_n^{-1}(B_{j})$ is a cubal.
 This implies that
 Gray code embedding minimizes each $\theta(n,f^{-1}(B_{j}))$, consequently,
 \begin{thm}\label{cpthm2}
 	For any $n\ge 2$, $n_1,n_2\ge1,\ n_1+n_2=n$,  Gray code embedding minimizes $\sum_{j=1}^{2^{n_2}-1} \theta(n,f^{-1}(B_{j}))$.
 \end{thm}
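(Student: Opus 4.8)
The plan is to prove this by a \emph{term-by-term} lower bound, exploiting the fact that the size of $f^{-1}(B_j)$ is forced by the geometry of the cylinder and does not depend on the choice of embedding. First I would observe that since $f$ is a bijection on vertex sets, $|f^{-1}(B_j)| = |B_j| = j\cdot 2^{n_1}$ for every embedding $f$ and every $1\le j\le 2^{n_2}-1$, because each of the first $j$ columns of the cylinder contains exactly $2^{n_1}$ vertices. This invariance of the cardinality is the crucial structural point: it decouples the lower bound on the $j$-th summand from the embedding.

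With the cardinality pinned down, the definition $\theta(n,k)=\min\{\theta(n,S):S\subset V(Q_n),\,|S|=k\}$ gives immediately, for each $j$ and each $f$,
$$\theta(n,f^{-1}(B_j))\ge \theta(n,j\cdot 2^{n_1}).$$
Summing over $j$ yields the embedding-independent bound
$$\sum_{j=1}^{2^{n_2}-1}\theta(n,f^{-1}(B_j))\ \ge\ \sum_{j=1}^{2^{n_2}-1}\theta(n,j\cdot 2^{n_1}).$$
To see that Gray code attains this bound, I would invoke the computation already carried out in Section \ref{sec:gray}: there it is shown that $\xi_n^{-1}(B_j)$ is a cubal for each $j$, so by Lemma \ref{Hcubal} one has $\theta(n,\xi_n^{-1}(B_j))=\theta(n,|B_j|)=\theta(n,j\cdot 2^{n_1})$. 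Hence the Gray code embedding realizes the lower bound summand by summand, and therefore minimizes the entire sum, which is exactly the assertion.

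I expect no serious obstacle in the theorem \emph{as stated}, since its genuine content has been discharged earlier: the real work lies in Section \ref{sec:gray}, in establishing that each $\xi_n^{-1}(B_j)$ is a cubal, which rests on the cartesian-product structure of $B_j$ together with Lemma 4.1(ii) and Lemma 4.2 of \cite{J} and the recursive structure of Gray code. Granting that, the present statement follows cleanly from Harper's optimality of cubals. The only subtle point is precisely the invariance of $|f^{-1}(B_j)|$ under $f$, which is what allows a simultaneous, term-by-term minimization: because each target size $j\cdot 2^{n_1}$ admits a cubal realized by the \emph{same} embedding $\xi_n$, there is no competition between minimizing different summands, and the per-$j$ lower bounds can all be achieved at once.
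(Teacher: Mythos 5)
Your proposal is correct and is essentially the paper's own argument: the paper likewise notes that each $\xi_n^{-1}(B_j)$ is a cubal (established in Section \ref{sec:gray} via the Cartesian-product structure and the lemmas of \cite{J}), hence attains $\theta(n,|B_j|)$, and concludes that the Gray code minimizes each summand term by term. You have merely made explicit the embedding-independence of $|f^{-1}(B_j)|=j\cdot 2^{n_1}$ and the resulting per-$j$ lower bound, which the paper leaves implicit.
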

 Together with \eqref{wlsum}, we only need to prove that
\begin{thm}\label{cpthm1}
For any $n\ge 2$, $n_1,n_2\ge1,\ n_1+n_2=n$,  Gray code embedding minimizes $\sum_{i=1}^{2^{n_1-1}} \theta(n,f^{-1}(A_i))$.
%where $A_i$ is defined as in \eqref{Ai}.
\end{thm}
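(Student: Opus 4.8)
The plan is to bound each summand $\theta(n,f^{-1}(A_i))$ from below by a quantity depending only on $\type(f^{-1}(A_i))$, to rescale these quantities by the factor $2^{n_2}$ down to the cube $Q_{n_1}$, and then to recognize the resulting sum as a circular wirelength of $Q_{n_1}$, whose minimum is supplied by \cite{Liu2021}. The numerology is encouraging: the target value $2^{n_2}(3\cdot 2^{2n_1-3}-2^{n_1-1})$ is exactly $2^{n_2}$ times the circular wirelength $3\cdot 2^{2n_1-3}-2^{n_1-1}$ of $Q_{n_1}$, so the whole argument should be the ``$2^{n_2}$-fold lift'' of the cyclic lower bound, with \eqref{double2} and \eqref{double} playing the role of the rescaling device and \eqref{graytype}--\eqref{gi-n2} certifying that the Gray code attains it.

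First I would fix an arbitrary embedding $f$, write $\alpha_i=f^{-1}(A_i)$ (so $|\alpha_i|=2^{n-1}$) and $t_i=\type(\alpha_i)$, and use the definition of $\theta(n,2^{n-1},t)$ to get $\theta(n,\alpha_i)\ge\theta(n,2^{n-1},t_i)$. Set $\tau_i=\min\{\lfloor t_i/2^{n_2}\rfloor,\,2^{n_1-3}\}$; I claim $\theta(n,2^{n-1},t_i)\ge 2^{n_2}\theta(n_1,2^{n_1-1},\tau_i)$ for every $i$. For small type, $t_i\le 2^{n-3}$, formula \eqref{small} gives $\theta(n,2^{n-1},t_i)=2\theta(n-2,t_i)+2^{n-1}$; since $\theta(n-2,\cdot)$ is nondecreasing on $[0,2^{n-3}]$ and $2^{n_2}\lfloor t_i/2^{n_2}\rfloor\le t_i$, applying \eqref{double} exactly $n_2$ times (note $n-2-n_2=n_1-2$) turns this into $2^{n_2}\big(2\theta(n_1-2,\tau_i)+2^{n_1-1}\big)=2^{n_2}\theta(n_1,2^{n_1-1},\tau_i)$, the last equality being \eqref{small} read inside $Q_{n_1}$ (here $\lfloor t_i/2^{n_2}\rfloor\le 2^{n_1-3}$ automatically, so the cap is inactive). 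For big type, $t_i>2^{n-3}$, estimate \eqref{big} gives $\theta(n,2^{n-1},t_i)\ge\theta(n,2^{n-1},2^{n-3})$, and the small-type computation at $t=2^{n-3}$ identifies the right-hand side with $2^{n_2}\theta(n_1,2^{n_1-1},2^{n_1-3})$, matching $\tau_i=2^{n_1-3}$. Summing, $\sum_{i=1}^{2^{n_1-1}}\theta(n,\alpha_i)\ge 2^{n_2}\sum_{i=1}^{2^{n_1-1}}\theta(n_1,2^{n_1-1},\tau_i)$, with every $\tau_i\in[0,2^{n_1-3}]$ of small type in $Q_{n_1}$.

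It then remains to prove $\sum_{i=1}^{2^{n_1-1}}\theta(n_1,2^{n_1-1},\tau_i)\ge 3\cdot 2^{2n_1-3}-2^{n_1-1}$, i.e. that the reduced type sequence costs at least the cyclic wirelength of $Q_{n_1}$, with equality for the Gray-code types $g_i^{n_1}$. The point is that the windows $A_1,\dots,A_{2^{n_1-1}}$ are precisely the $2^{n_1-1}$ half-cylinders cut out by rotating a length-$2^{n_1-1}$ arc around the circle $C_{2^{n_1}}$ of rows, so they carry exactly the ``rotating half-size arc'' structure on $Q_{n_1}$ that underlies the cyclic bound of \cite{Liu2021}; the analysis of $G_i$ and $g_i^{n}$ in Section 2.4 there is designed to minimize $\sum_i\theta(n_1,2^{n_1-1},\cdot)$ over exactly such sequences. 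I would therefore verify that $(\tau_i)$ satisfies the combinatorial constraints driving that minimization and then quote the resulting inequality; the cases $n_1\in\{1,2\}$ (where $2^{n_1-3}$ is not a positive integer and the cylinder degenerates) are checked by hand, using the single-term bound $\theta(n,\alpha_1)\ge\theta(n,2^{n-1})$ when $n_1=1$.

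The main obstacle is this last step. The operations $t_i\mapsto\lfloor t_i/2^{n_2}\rfloor$ together with the truncation at $2^{n_1-3}$ can in principle destroy the step-by-step regularity (such as the ``consecutive types differ by at most one'' behaviour) that a clean reduction to an honest cycle embedding of $Q_{n_1}$ would require, and the fibres $f^{-1}(\text{row }r)$ are not subcubes, so there is no literal graph quotient $Q_n\to Q_{n_1}$. The crux is thus to show, at the level of the type indicator alone, that the rescaled sequence $(\tau_i)$ is bounded below in the required sense by a genuine cyclic type sequence of $Q_{n_1}$, so that the \cite{Liu2021} lower bound may legitimately be invoked; I expect this to demand re-running their rotation/type estimate directly for $(t_i)$ rather than applying it as a black box.
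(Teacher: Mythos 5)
Your overall strategy --- bound each term by $\theta(n,2^{n-1},t_i)$, rescale the type sequence by $2^{n_2}$ down to $Q_{n_1}$, and invoke the cyclic lower bound of \cite{Liu2021} --- is exactly the shape of the paper's argument. But the pointwise inequality you use to carry out the rescaling is false. You claim $\theta(n,2^{n-1},t)\ge 2^{n_2}\theta(n_1,2^{n_1-1},\lfloor t/2^{n_2}\rfloor)$ on the grounds that $\theta(n-2,\cdot)$ is nondecreasing on $[0,2^{n-3}]$. It is not: $\theta(m,k)=mk-2E(k)$ is not monotone in $k$ (already $\theta(3,3)=5>4=\theta(3,4)$, and in the other direction $\theta(5,15)=19<20=\theta(5,14)$). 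A concrete counterexample to your key claim: take $n=7$, $n_1=6$, $n_2=1$, $t=15$. Then by \eqref{small}, $\theta(7,2^{6},15)=2\theta(5,15)+2^{6}=102$, while $2\,\theta(6,2^{5},7)=2\bigl(2\theta(4,7)+2^{5}\bigr)=104$. So rounding an odd type \emph{down} can strictly decrease the true cost below your claimed lower bound, and the term-by-term reduction collapses. (Your big-type branch is fine; it is the small-type floor that breaks.)

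This is precisely the difficulty the paper's Proposition \ref{lemodd} is built to overcome: odd entries cannot be rounded independently or always downward. Instead, Lemma \ref{key} gives the identity $2\theta(n,2^{n-1},2k+1)=\theta(n,2^{n-1},2k)+\theta(n,2^{n-1},2k+2)+4$, so an odd value strictly exceeds the \emph{average} of its two even neighbours; one then rounds each maximal odd run collectively up or collectively down, choosing whichever direction has the smaller total (Lemma \ref{general}), and in the $n_2=1$ case rounds certain runs \emph{up} (Lemma \ref{sometou}, property $(\mathcal{U})$) specifically to preserve the ``at least two large entries'' condition (C2) that the cyclic bound of \cite{Liu2021} requires. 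Your closing paragraph correctly identifies that the truncated-and-floored sequence $(\tau_i)$ may fail the combinatorial constraints of the cyclic minimization --- indeed $\lfloor t_i/2^{n_2}\rfloor$ can drop a witness of (C2) below $2^{n_1-3}$ --- but you leave both that verification and the (irreparable, as stated) pointwise bound unproved, so the proposal as written has a genuine gap at its central step.
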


Notice that for each $1\le i \le 2^{n_1-1}$, 
$f^{-1}(A_i)$ consists of $2^{n-1}$ vertices, hence 
$\sum_{i=1}^{2^{n_1-1}} \theta(n,f^{-1}(A_i))$ is largely related to its each type.
In the following, we firstly summarize the derivative properties of these types, then finish the proof of Theorem \ref{cpthm1}.

\subsection{Type sequence of $f^{-1}(A_i)$}\

In this subsection, we follow the results in 
\cite{Guu,Liu2021} to discuss the properties of the induced type sequence corresponding to an embedding $f$. 
For $1\le i \le 2^{n-1}$,  letting 
$P_i=f^{-1}(S_i), \type(P_i)=t_i$, where $S_i$ is defined in \eqref{Si}, then
\begin{lem}[\cite{Guu}, Prop.2.6 of \cite{Liu2021} ]\label{prop8}
	The sequence $(t_i)_{i=1}^{2^{n-1}}$ satisfies	
	\begin{itemize}
		\item[(i)] circular continuous,i.e.,for any $1\le i<2^{n-1}$, $|t_i-t_{i+1}|\le1$;	
		\item[(ii)] there exist at least two $1\le i\le2^{n-1}$ such that $t_i \ge 2^{n-3}$.
	\end{itemize}
\end{lem}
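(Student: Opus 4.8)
The plan is to treat the two assertions separately: (i) follows from a single-vertex-swap estimate for $\type$, while (ii) requires promoting that estimate from $\type$ itself to the individual half-plane counts.

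For (i), I would first record that consecutive windows differ by exactly one vertex: since $S_{i+1}=(S_i\setminus\{i\})\cup\{i+2^{n-1}\}$, we have $P_{i+1}=(P_i\setminus\{f^{-1}(i)\})\cup\{f^{-1}(i+2^{n-1})\}$. Hence for every half-plane $H\in\mathscr{H}_n$ the count $|P_i\cap H|$ changes by at most one, i.e. $\bigl||P_{i+1}\cap H|-|P_i\cap H|\bigr|\le 1$. Evaluating $\type$ at a half-plane attaining the minimum for $P_i$ (resp. $P_{i+1}$) then yields $t_{i+1}\le t_i+1$ and $t_i\le t_{i+1}+1$, which is (i). I would also establish the complementation symmetry $\type(S)=\type(S^c)$ for $|S|=2^{n-1}$, by pairing each half-plane with its complement and using $|S\cap H|+|S\cap H^c|=2^{n-1}$; this gives $t_{i+2^{n-1}}=t_i$ when indices are read modulo $2^n$, so the sequence is genuinely circular of period $2^{n-1}$ (in particular $|t_{2^{n-1}}-t_1|\le1$, via the single swap from window $2^{n-1}$ to window $2^{n-1}+1$).

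The substance is (ii), and the key extra ingredient is that each \emph{coordinate count} $c^{(j)}_i:=|P_i\cap\{v:v_j=0\}|$ is itself $1$-Lipschitz along consecutive windows (read modulo $2^n$, every consecutive pair is a single swap) and satisfies $c^{(j)}_{i+2^{n-1}}=2^{n-1}-c^{(j)}_i$. I would argue by contradiction, assuming at most one index in $\{1,\dots,2^{n-1}\}$ has $t_i\ge 2^{n-3}$. By complementation the set of ``big'' windows $\{i:t_i\ge 2^{n-3}\}$ is invariant under $i\mapsto i+2^{n-1}$, so its count over the full cycle is even, hence $0$ or $2$ under the assumption. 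The heart of the argument is the claim that two consecutive windows both of type $<2^{n-3}$ must be \emph{heavy in a common facet}: type $<2^{n-3}$ forces $|P_i\cap H|\le 2^{n-3}-1$ for some $H$, hence $|P_i\cap H^c|\ge 2^{n-1}-2^{n-3}+1$; if the two heavy facets lay in different coordinates, an inclusion--exclusion estimate (using that $P_i,P_{i+1}$ differ by one vertex) would force more than $2^{n-2}$ vertices of $P_i$ into a codimension-two subcube of size $2^{n-2}$, and if they were opposite sides of one coordinate then $P_i$ would contain more than $2^{n-1}$ vertices across two disjoint facets---both impossible.

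Consequently the heavy facet is constant along any maximal run of consecutive small-type windows, so some coordinate count stays $>2^{n-2}$ throughout the run. In the $0$-big case the whole cycle is a single run, forcing some $c^{(j)}>2^{n-2}$ everywhere, contradicting $c^{(j)}_{i+2^{n-1}}=2^{n-1}-c^{(j)}_i<2^{n-2}$. In the $2$-big case the big windows are antipodal, at $p$ and $p+2^{n-1}$, and the run strictly between them is heavy in one facet, say $\{v_j=0\}$; combining $1$-Lipschitzness at $p$ with the bound $c^{(j)}_p\le 2^{n-1}-2^{n-3}$ coming from $t_p\ge 2^{n-3}$ pins $c^{(j)}_p=2^{n-1}-2^{n-3}$, whence $c^{(j)}_{p+2^{n-1}}=2^{n-3}$, while the window just before $p+2^{n-1}$ lies in the run and has $c^{(j)}\ge 2^{n-1}-2^{n-3}+1$, a drop of $2^{n-2}+1>1$ across a single swap. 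I expect (ii) to be the main obstacle, precisely because $\type$ is a \emph{minimum} over the $2n$ half-planes, so $1$-Lipschitzness of $\type$ alone is too weak (indeed for Gray code the two big windows are $2^{n-2}$ apart, not adjacent); the resolution is to work with the individual counts $c^{(j)}$, whose Lipschitz behaviour together with $c^{(j)}_{i+2^{n-1}}=2^{n-1}-c^{(j)}_i$ excludes the sparse configurations. The remaining care is bookkeeping: verifying the common-facet claim in the boundary regime and checking the small case $n=2$ directly.
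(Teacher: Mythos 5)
The paper does not actually prove this lemma: it is imported verbatim from Guu's thesis and Prop.~2.6 of \cite{Liu2021}, so there is no in-paper argument to compare against. Your blind proof is, as far as I can check, a correct and self-contained derivation for $n\ge 3$. Part (i) is exactly the standard single-swap argument ($S_{i+1}=(S_i\setminus\{i\})\cup\{i+2^{n-1}\}$, so every half-plane count, hence the minimum, is $1$-Lipschitz), and your complementation identity $\type(S)=\type(S^c)$ for $|S|=2^{n-1}$ is what makes the sequence genuinely circular. For (ii), your key observations all check out numerically: a window of type $<2^{n-3}$ has a facet containing at least $3\cdot 2^{n-3}+1$ of its vertices; two such heavy facets in distinct coordinates would force at least $2^{n-2}+1$ vertices into a $2^{n-2}$-element codimension-two subcube, and opposite facets would force $|P_i|>2^{n-1}$, so the heavy facet is unique and locally constant along runs of small-type windows; the $0$-big case then contradicts $c^{(j)}_{i+2^{n-1}}=2^{n-1}-c^{(j)}_i$, and in the $2$-big case the antipodality of the two big windows pins $c^{(j)}_p=2^{n-1}-2^{n-3}$ and produces a jump of $2^{n-2}+1$ across a single swap. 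Your diagnosis that $1$-Lipschitzness of $\type$ alone cannot give (ii), and that one must descend to the individual facet counts, is exactly the right point. One caveat you should make explicit rather than defer: for $n=2$ the statement is not merely a case to ``check directly'' --- it is false as literally written (for the Gray code itself $t_1=t_2=0<2^{-1}$), and the threshold $3\cdot 2^{n-3}+1$ in your heavy-facet argument is non-integral there. This is a defect inherited from the cited sources (the lemma is only meaningful for $n\ge 3$, where your argument is complete), so you should simply state $n\ge3$ as a hypothesis.
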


Now, we suggest some new notations to describe $(\type(f^{-1}(A_i)))_{i=1}^{2^{n_1-1}}$.
\begin{defn}
For $m,n>0$, an integer sequence $(x_i)_{i=1}^n$  is called an {\it $m-$level circular continuous} sequence, if  for $1\le i <n,\ |x_i-x_{i+1}|\le 2^{m}$, and $|x_n-x_1|\le 2^{m}$.
\end{defn}

For any $n\ge 2$, $n_1,n_2\ge1,\ n_1+n_2=n$.
%any embedding $f$ of $Q_n$ into $C_{2^{n_1}}\times P_{2^{n_2}}$. 
It is direct to see that for $1\le i \le 2^{n_1-1}$,
$$ f^{-1}(A_i)=P_{1+2^{n_2}(i-1)},\ \type(f^{-1}(A_i))=t_{1+2^{n_2}(i-1)}.$$
For brevity, denote $t_{1+2^{n_2}(i-1)}$ by $s_i$. 
Clearly, $(s_i)_{i=1}^{2^{n_1-1}}$ is a subsequence of sequence $(t_i)_{i=1}^{2^{n-1}}$ and possesses similar properties with $(t_i)_{i=1}^{2^{n-1}}$.

\begin{prop}\label{tyseq}
	For any $n\ge 2$, $n_1,n_2\ge1,\ n_1+n_2=n$. The sequence $(s_i)_{i=1}^{2^{n_1-1}}$ satisfies	
	\begin{itemize}
		\item[\textbf{(C1)}] $(s_i)_{i=1}^{2^{n_1-1}}$ is $n_2-$level circular continuous;	
		\item[\textbf{(C2)}] there exist at least two $1\le i \le 2^{n_1-1}$ such that $s_i\ge2^{n-3}-2^{n_2-1}$.
	\end{itemize}
\end{prop}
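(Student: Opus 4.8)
The plan is to deduce both parts of Proposition~\ref{tyseq} by transferring the corresponding properties of the full type sequence $(t_i)_{i=1}^{2^{n-1}}$ recorded in Lemma~\ref{prop8} to its subsample $(s_i)_{i=1}^{2^{n_1-1}}$, where $s_i=t_{1+2^{n_2}(i-1)}$. Here (C1) is the analogue of Lemma~\ref{prop8}(i) and (C2) the analogue of Lemma~\ref{prop8}(ii). In both cases the passage from $(t_i)$ to $(s_i)$ advances the index by $2^{n_2}$ at each step, so the unit step size $1$ of Lemma~\ref{prop8}(i) is inflated to $2^{n_2}$, and the threshold $2^{n-3}$ of Lemma~\ref{prop8}(ii) is lowered by the slack $2^{n_2-1}$ that a single subsampling step can cost.

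For (C1) I would iterate the one-step bound of Lemma~\ref{prop8}(i). Two consecutive samples $s_i=t_{1+2^{n_2}(i-1)}$ and $s_{i+1}=t_{1+2^{n_2}i}$ have indices differing by exactly $2^{n_2}$, so chaining the $2^{n_2}$ unit steps between them and applying the triangle inequality yields $|s_i-s_{i+1}|\le 2^{n_2}$. For the wraparound term the two sample indices are $s_{2^{n_1-1}}=t_{2^{n-1}-2^{n_2}+1}$ and $s_1=t_1$, which are again $2^{n_2}$ apart once one uses the circular (period $2^{n-1}$) continuity of $(t_i)$ to close the cycle with the step $|t_{2^{n-1}}-t_1|\le 1$. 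Thus $(s_i)_{i=1}^{2^{n_1-1}}$ is $n_2$-level circular continuous, which is exactly (C1).

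For (C2) I would start from the two indices $j_1,j_2$ furnished by Lemma~\ref{prop8}(ii), so that $t_{j_1},t_{j_2}\ge 2^{n-3}$. Since the sample indices $1+2^{n_2}(i-1)$ are equally spaced $2^{n_2}$ apart around the cycle of length $2^{n-1}$, every index lies within $2^{n_2-1}$ of some sample index; choosing the sample $i_k$ nearest to $j_k$ and invoking circular continuity gives $s_{i_k}=t_{1+2^{n_2}(i_k-1)}\ge t_{j_k}-2^{n_2-1}\ge 2^{n-3}-2^{n_2-1}$. This already produces at least one sample clearing the threshold. I would also record that for $n_1\le 2$ the threshold $2^{n-3}-2^{n_2-1}$ is $\le 0$, so (C2) holds trivially, and the genuine content lies in the range $n_1\ge 3$.

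The main obstacle is to guarantee that the two peaks $j_1,j_2$ land on \emph{distinct} samples, i.e.\ $i_1\ne i_2$; otherwise the transfer recovers only a single large $s_i$. Two indices map to the same nearest sample only when their cyclic distance is at most $2^{n_2}$, so it suffices to exhibit two indices with $t\ge 2^{n-3}$ at cyclic distance strictly greater than $2^{n_2}$. I expect this separation to be the crux of the whole proposition. I would obtain it by revisiting the proof of Lemma~\ref{prop8}(ii) (Prop.~2.6 of \cite{Liu2021}), where the two big-type indices arise from two well-separated peaks of the type sequence at cyclic distance of order $2^{n-2}$, and $2^{n-2}>2^{n_2}$ precisely when $n_1\ge 3$, which is exactly the nontrivial regime. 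A cleaner alternative, which sidesteps having to quantify the separation of the peaks of $(t_i)$, is to mirror the argument of \cite{Guu,Liu2021} directly on the subsampled sequence $(s_i)$, establishing (C2) intrinsically rather than by transfer from $(t_i)$.
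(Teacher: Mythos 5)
Your proposal follows essentially the same route as the paper's proof. For (C1) the paper does exactly what you describe: chain the $2^{n_2}$ unit steps supplied by Lemma \ref{prop8}(i) through the triangle inequality to get $|s_i-s_{i+1}|\le 2^{n_2}$. For (C2) the paper also argues by transfer: around an index $k$ with $t_k\ge 2^{n-3}$, circular continuity keeps $t_{k+j}\ge 2^{n-3}-2^{n_2-1}$ for $|j|\le 2^{n_2-1}$, and this window must contain a sample index, so some $s_{i_1}$ clears the threshold. The one point where you go beyond the paper is in insisting that the two peaks of Lemma \ref{prop8}(ii) produce \emph{distinct} samples $i_1\ne i_2$; the paper disposes of this with a bare ``Similarly'' and never verifies the separation, so your concern is a legitimate criticism of the published argument rather than a defect of your own. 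Your proposed remedies (a quantitative separation of the two big-type indices extracted from \cite{Liu2021}, or rerunning the type argument intrinsically on $(s_i)$) are both plausible, but note that neither is carried out here or in the paper, so this step remains the genuinely unfinished part on both sides.
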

\begin{proof}
(1). Without loose of generality(WLOG),we only prove $s_1,\ s_2$, the others are same. It is direct to verify that
	$$|s_1-s_2|=|t_{1}-t_{1+2^{n_2}}|
	\le|t_{1}-t_{2}|+|t_{2}-t_{3}|+\cdots+|t_{2^{n_2}}-t_{1+2^{n_2}}|
	\le2^{n_2},$$
where the last inequality is due to Lemma \ref{prop8} (i). So, (C1) is proved.
	
(2). By Lemma \ref{prop8} (ii), assume $t_k\ge 2^{n-3}$ for some $1\le k\le 2^{n-1}$. Due to Lemma \ref{prop8} (i),  $t_{k+j}\ge2^{n-3}-2^{n_2-1}$ for each $-2^{n_2-1}\le j\le 2^{n_2-1}$. 
Notice that $(s_i)_{i=1}^{2^{n_1-1}}$ is a subsequence of sequence $(t_i)_{i=1}^{2^{n-1}}$ and $n_2-$level circular continuous. Then, there must exist some  $1\le i_1\le 2^{n_1-1}$,$-2^{n_2-1}\le j_1\le 2^{n_2-1}$, such that $s_{i_1}=t_{k+j_1}\ge2^{n-3}-2^{n_2-1}$.  Similarly, we can find another $s_{i_2}\ge2^{n-3}-2^{n_2-1}$ for $1\le i_2\le 2^{n_1-1}$.
\end{proof}

\begin{rem}\label{n2=0}
	For $n_2=0$,  Proposition \ref{tyseq} also holds. In other words, 
	Proposition \ref{tyseq} is actually the generalization of  Lemma \ref{prop8}. 
\end{rem}

%%%%%%%%%%%%%%%%%%%%%%%%%%%%%%%%%%%%%%%%%%%%%%%%%%%%%%%%%%%%%%%%%%
\subsection{Proof of Theorem \ref{cpthm1}}\

In this subsection, we devote to proving Theorem \ref{cpthm1}. For any $n\ge 2$, $n_1,n_2\ge1,\ n_1+n_2=n$, by  the definition of $\theta(n,k,t)$ and \eqref{big},
\begin{equation}\label{eq1}
	\sum_{i=1}^{2^{n_1-1}} \theta(n,f^{-1}(A_i))\ge
	\sum_{i=1}^{2^{n_1-1}}\theta(n,2^{n-1},s_i)\ge
	\sum_{i=1}^{2^{n_1-1}}\theta(n,2^{n-1},s_i^{(1)}),
\end{equation}
where $s_i^{(1)}$ is $s_i$ if $0\le s_i\le 2^{n-3}$ and $2^{n-3}$ otherwise.
It is clear that $(s_i^{(1)})_{i=1}^{2^{n_1-1}}$ satisfies condition (C1)(C2) and (C3), where\\
\textbf{(C3)}.all the entries are less than or equal to $2^{n-3}$.

For any $n\ge 2$, $n_1\ge 1, n_2\ge 0,\ n_1+n_2=n$. We say a sequence  $(x_{i})_{i=1}^{2^{n_1-1}}$ is of CONDITION $C_{n,n_2}$,\ if it satisfies  conditions (C1)(C2)(C3) corresponding to $n,n_2$. What is more,
\begin{lem}\label{halfseq}
For any $n\ge 2$, $n_1, n_2\ge 1,\ n_1+n_2=n$.
If  $(x_{i})_{i=1}^{2^{n_1-1}}$ is an even sequence with CONDITION $C_{n,n_2}$, then the sequence  $(x_{i}/2)_{i=1}^{2^{n_1-1}}$ is of CONDITION $C_{n-1,n_2-1}$. 
\end{lem}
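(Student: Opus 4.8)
The plan is to verify directly that the halved sequence $(x_i/2)_{i=1}^{2^{n_1-1}}$ satisfies each of the three defining conditions of CONDITION $C_{n-1,n_2-1}$, since halving carries every bound into exactly the bound required at the next level. First I observe that when $n$ drops to $n-1$ and $n_2$ drops to $n_2-1$, the implicit first parameter is $(n-1)-(n_2-1)=n_1$, so $n_1$ is unchanged; hence the sequence length $2^{n_1-1}$ is preserved and the pair $(n_1,n_2-1)$ still sums to $n-1$, so CONDITION $C_{n-1,n_2-1}$ is well posed for a sequence of this length. The hypothesis that $(x_i)$ is an \emph{even} sequence is used precisely to guarantee that each $x_i/2$ is again an integer, so that $(x_i/2)_{i=1}^{2^{n_1-1}}$ is a legitimate integer sequence to which the conditions apply.

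For (C1), I would start from the $n_2$-level circular continuity of $(x_i)$, namely $|x_i-x_{i+1}|\le 2^{n_2}$ for $1\le i<2^{n_1-1}$ together with $|x_{2^{n_1-1}}-x_1|\le 2^{n_2}$, and simply divide by $2$: this yields $|x_i/2-x_{i+1}/2|\le 2^{n_2-1}$ and the corresponding wraparound bound, which is exactly $(n_2-1)$-level circular continuity. For (C3), from $x_i\le 2^{n-3}$ I obtain $x_i/2\le 2^{n-4}=2^{(n-1)-3}$, the required ceiling at level $n-1$. For (C2), the (at least two) indices $i$ with $x_i\ge 2^{n-3}-2^{n_2-1}$ supply, after halving, the same number of indices with $x_i/2\ge 2^{n-4}-2^{n_2-2}=2^{(n-1)-3}-2^{(n_2-1)-1}$, which is precisely the threshold demanded by (C2) for the parameters $(n-1,n_2-1)$.

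Thus each condition transforms into its next-level analogue under a single division by two, and I do not expect a genuine obstacle in the argument. The only point that requires care is the exponent bookkeeping — matching $2^{n_2}/2=2^{n_2-1}$, $2^{n-3}/2=2^{n-4}$, and $2^{n_2-1}/2=2^{n_2-2}$ against the definitions attached to CONDITION $C_{n-1,n_2-1}$ — which I would spell out explicitly to confirm the three identifications. The lemma is therefore essentially a scaling statement, and I anticipate the write-up to be a short three-part verification with the evenness hypothesis invoked solely to preserve integrality.
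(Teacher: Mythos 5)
Your proof is correct and matches the paper's intent exactly: the paper simply asserts that the lemma "is direct to check... due to the definition of CONDITION $C_{n,n_2}$," and your three-part verification (halving each bound, with evenness ensuring integrality) is precisely that direct check, including the correct observation that $n_1$ and hence the sequence length are preserved.
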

It  is direct to check Lemma \ref{halfseq} holds due to the definition of CONDITION $C_{n,n_2}$.
Next, we are only interested in those subsets of $Q_n$ with small type.
Using \eqref{E(k)} repeatedly, we obtain the recurrence relation of $E(k)$ in Lemma \ref{recurrenceE(k)}. By this Lemma, we have
Lemma \ref{key} which  provides the relation of $\theta(n,2^{n-1},t)$  and $\theta(n,2^{n-1},t\pm 1)$ when $t$ is odd. It will play an important role in the rest proof.

\begin{lem}\label{recurrenceE(k)}
For	$k>0,\ k=\sum_{i=1}^{N} 2^{c_i},\ N\ge 1,\ 0\le c_1<c_2<\cdots<c_N=\left \lfloor  \log_2 k\right \rfloor$, then
\mbox{(i).} $E(2k+1)=E(2k)+N$,\ \ 
\mbox{(ii).} $E(2k+2)=E(2k+1)+N+1.$
\end{lem}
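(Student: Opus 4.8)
The plan is to read both identities directly off the closed form \eqref{E(k)} by tracking how the binary expansion of the argument changes, the key being that in \eqref{E(k)} the coefficient of the $i$-th lowest set bit is $N-i$, where $N$ is the \emph{total} number of set bits. Thus inserting a bit at the bottom shifts the rank of every higher bit by one, but this is exactly compensated by the increase of $N$ by one, so the coefficient of each pre-existing bit is left invariant.

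For part (i), note that $2k$ has set bits at positions $c_1+1<\cdots<c_N+1$, while $2k+1$ has the same bits together with one extra bit at position $0$. By the rank-versus-count invariance just described, every pre-existing bit contributes identically to $E(2k)$ and to $E(2k+1)$. The only genuinely new term comes from the bit at position $0$, which is now the lowest (rank $1$) bit among $N+1$ bits and therefore contributes $(N+1-1)2^{0}+0\cdot 2^{-1}=N$. Hence $E(2k+1)=E(2k)+N$, and this half is essentially immediate.

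For part (ii) the same bookkeeping applies once the carry is resolved, and I expect this carry analysis to be the only real obstacle. The step $2k+1\mapsto 2k+2=2(k+1)$ triggers a carry whose length equals the number of trailing ones of $2k+1$. If $k$ has $r\ge 0$ trailing ones (positions $0,\dots,r-1$), then $2k+1$ has trailing ones at positions $0,\dots,r$, and $2k+2$ replaces this entire run by a single set bit at position $r+1$, leaving all higher bits untouched. By exactly the rank-versus-count invariance of (i), every higher bit contributes identically to $E(2k+1)$ and $E(2k+2)$, so the difference collapses to a finite sum,
\begin{equation*}
 E(2k+2)-E(2k+1)=(N-r)2^{r+1}+(r+1)2^{r}-\sum_{p=0}^{r}\bigl[(N-p)2^{p}+p\,2^{p-1}\bigr],
\end{equation*}
and an elementary evaluation of this geometric-type sum (using $\sum_{p=0}^{r}p2^{p}=2+(r-1)2^{r+1}$) collapses everything to $N+1$.

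A cleaner route to (ii) that sidesteps the carry bookkeeping is to first record the doubling identity $E(2m)=2E(m)+m$, which follows at once by combining \eqref{double} with \eqref{theta(k)}. Then, using (i), one has $E(2k+1)=E(2k)+N=2E(k)+k+N$ and $E(2k+2)=2E(k+1)+(k+1)$, whence $E(2k+2)-E(2k+1)=2\bigl(E(k+1)-E(k)\bigr)-N+1$. Since $k$ and $2k$ have the same number of set bits, namely $N$, a strong induction on $m$ with hypothesis $E(k+1)-E(k)=N$ closes this to $N+1$. Indeed, parts (i) and (ii) together are the single statement that $E(m+1)-E(m)$ equals $\big(\text{the number of ones in the binary expansion of }m\big)$, split by the parity of $m$, with (i) furnishing the even case non-inductively and the displayed reduction furnishing the odd case.
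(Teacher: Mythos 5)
Your proof is correct. Part (i) is exactly the paper's argument: both of you observe that prepending a lowest bit raises the rank of every existing bit by one while also raising $N$ by one, so the coefficient $N-i$ in \eqref{E(k)} is unchanged and only the new term $N\cdot 2^0$ appears. For part (ii) your primary route is also essentially the paper's: the paper splits into the cases $c_1>0$ (no carry) and $c_1=0$ (a run of $N_1$ trailing ones in $k$), and in the second case expands $E(2k+1)$ and $E(2k+2)$ in full and compares; your single difference formula with $r$ trailing ones subsumes both cases at once (the paper's Case 1 is your $r=0$) and isolates only the terms that actually change, which is tidier — I checked that your displayed sum does evaluate to $N+1$ with the identity you cite, so leaving that step as "elementary" is acceptable. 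Your alternative route to (ii) is genuinely different from the paper: deriving $E(2m)=2E(m)+m$ from \eqref{double} and \eqref{theta(k)} and then running a strong induction on the unified statement that $E(m+1)-E(m)$ equals the binary digit sum of $m$ avoids all carry bookkeeping, at the cost of importing the $\theta$-identities and a (harmless) induction; the paper's direct computation stays entirely inside formula \eqref{E(k)}. Either version suffices, and the unified statement $E(m+1)-E(m)=s_2(m)$ is a nice way to package the lemma.
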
 
\begin{proof}
(i). Since $2k=\sum_{i=1}^{N} 2^{c_i+1},\ 2k+1=2^0+\sum_{i=1}^{N} 2^{c_i+1}$, by \eqref{E(k)},
$$\begin{array}{cl}
&E(2k)=\sum_{i=1}^{N}(N-i)2^{c_i+1}+(c_i+1) 2^{c_i},\\
&E(2k+1)=N\cdot 2^0+0+\sum_{i=1}^{N}(N-i)2^{c_i+1}+(c_i+1) 2^{c_i}.		
\end{array}$$
As mentioned above, result (i) follows. 

(ii). \textbf{Case 1:} $c_1>0$. It implies that $2k+2=2^1+\sum_{i=1}^{N} 2^{c_i+1}$.
Consequently,  $$E(2k+2)=N\cdot 2^1+1+\sum_{i=1}^{N}(N-i)2^{c_i+1}+(c_i+1)2^{c_i}.$$
Comparing $E(2k+2)$ with $E(2k+1)$ in (i), result (ii) follows.

\textbf{Case 2:} $c_1=0$. There is $1\le N_1\le N$ such that $$k=\sum_{i=1}^{N_1} 2^{i-1}+\sum_{i=N_1+1}^{N} 2^{c_i},\ \ \mbox{for}\  c_{N_1+1}>N_1.$$
Then
$2k=\sum_{i=1}^{N_1} 2^{i}+\sum_{i=N_1+1}^{N} 2^{c_i+1}$, and
$$\begin{array}{rcl}
E(2k+1)&=&E(2k)+N\vspace{1.5ex}\\
&=&\sum_{i=1}^{N_1}((N-i)2^{i}+i2^{i-1})+\alpha+N\vspace{1.5ex}\\
&=&\sum_{i=1}^{N_1}(N-i)2^{i}+(1+\sum_{i=2}^{N_1}i2^{i-1})+N+\alpha\vspace{1.5ex}\\
&=&\sum_{i=1}^{N_1-1}(N-i)2^{i}+(N-N_1)2^{N_1}+
(1+\sum_{i=1}^{N_1-1}(i+1)2^{i})+N+\alpha\vspace{1.5ex}\\
&=&(N+1)\sum_{i=1}^{N_1-1}2^{i}+(N-N_1)2^{N_1}+1+N+\alpha\vspace{1.5ex}\\
&=&(N+1)(2^{N_1}-2)+(N-N_1)2^{N_1}+1+N+\alpha\vspace{1.5ex}\\
&=&(2N-N_1+1)2^{N_1}+\alpha-(N+1),
\end{array}$$
where $\alpha=\sum_{i=N_1+1}^{N}(N-i)2^{c_i+1}+(c_i+1)2^{c_i}.$
Moreover, we have $2k+2=2^{N_1+1}+\sum_{i=N_1+1}^{N} 2^{c_i+1}$, and
$$E(2k+2)=(N-N_1)2^{N_1+1}+(N_1+1)2^{N_1}+\alpha
=(2N-N_1+1)2^{N_1}+\alpha.$$

Comparing $E(2k+2)$ with $E(2k+1)$, result (ii) still follows.
Notice that $\sum_{i=N_1+1}^{N} 2^{c_i}=0$, $\alpha=0$ if $N_1=N$, then Lemma (ii) still holds.
Combining these results with \textbf{Case 1} and \textbf{Case 2},   Lemma (ii) is proved.
\end{proof}

For any $k>0$, by Lemma \ref{recurrenceE(k)}, we have
\begin{equation}\label{key1}
2E(2k+1)=E(2k)+E(2k+2)-1.
\end{equation}

\begin{lem}\label{key}
	For $ 0< k \le 2^{n-4}-1$ with $n\ge 4$,
	$$2\theta(n,2^{n-1},2k+1)=\theta(n,2^{n-1},2k)+\theta(n,2^{n-1},2k+2)+4.$$
\end{lem}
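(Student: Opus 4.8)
The plan is to collapse the claimed identity, via the two reduction formulas already in hand, down to equation \eqref{key1}, which has just been obtained from Lemma \ref{recurrenceE(k)}. The first thing I would verify is that the hypothesis $0<k\le 2^{n-4}-1$ keeps all three relevant types inside the regime where \eqref{small} is valid: since $k\ge 1$ we have $2k\ge 2$, and since $k\le 2^{n-4}-1$ we have $2k+2\le 2^{n-3}$, so each of $2k$, $2k+1$, $2k+2$ lies in $[0,2^{n-3}]$ and formula \eqref{small} may be applied to $\theta(n,2^{n-1},\cdot)$ at all three arguments.

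Next I would substitute \eqref{small} into both sides of the asserted equality. Each occurrence of $\theta(n,2^{n-1},t)$ contributes the additive constant $2^{n-1}$, and since both sides carry two such occurrences, the constants $2\cdot 2^{n-1}=2^{n}$ cancel. What survives is the purely $(n-2)$-dimensional statement
\[
2\theta(n-2,2k+1)=\theta(n-2,2k)+\theta(n-2,2k+2)+2,
\]
now expressed entirely through the unconstrained isoperimetric quantity $\theta(n-2,\cdot)$.

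Then I would apply \eqref{theta(k)}, writing $\theta(n-2,j)=(n-2)j-2E(j)$ for $j=2k,\,2k+1,\,2k+2$. The linear parts are $(n-2)$ times $2(2k+1)$ on the left and $(n-2)$ times $(2k)+(2k+2)=2(2k+1)$ on the right, hence they cancel identically, leaving only the edge-count terms. The remaining equation is exactly $2E(2k+1)=E(2k)+E(2k+2)-1$, which is \eqref{key1}, and the proof is complete.

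The derivation is a single substitution chain with no genuine obstacle; the only point that demands care is the range bookkeeping in the first step. Formula \eqref{small} breaks down once the type exceeds $2^{n-3}$, and it is precisely in order to guarantee $2k+2\le 2^{n-3}$ that the hypothesis is posed as $k\le 2^{n-4}-1$ rather than $k\le 2^{n-4}$; I would make sure this boundary case is handled cleanly.
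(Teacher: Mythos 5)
Your proposal is correct and follows essentially the same route as the paper's proof: reduce via \eqref{small} to the statement $2\theta(n-2,2k+1)=\theta(n-2,2k)+\theta(n-2,2k+2)+2$, then expand with \eqref{theta(k)} so that the linear terms cancel and the identity collapses to \eqref{key1}. Your extra bookkeeping on why $k\le 2^{n-4}-1$ keeps all three types within the validity range of \eqref{small} is a welcome clarification that the paper leaves implicit.
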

\begin{proof}
	By \eqref{small}, we only need to prove
	$$2\theta(n-2,2k+1)=\theta(n-2,2k)+\theta(n-2,2k+2)+2.$$
Together with \eqref{theta(k)} and \eqref{key1}, we have
	$$\begin{array}{cl}
		2\theta(n-2,2k+1)
		&=2[(n-2)(2k+1)-2E(2k+1)]\\
		&=(n-2)(2k+2k+2)-2(E(2k)+E(2k+2)-1)\\
		&=(n-2)\cdot 2k-2E(2k)+ (n-2)(2k+2)-2E(2k+2)+2\\
		&=\theta(n-2,2k)+\theta(n-2,2k+2)+2.
	\end{array}$$	
\end{proof}

Furthermore,  we want to find the relation between $\sum_{t}\theta(n,2^{n-1},t)$  and $\sum_{t}\theta(n,2^{n-1},t\pm 1)$ when $t$ is odd.
\begin{lem}\label{general}
	For any $n\ge 3,k\ge 1$, positive odd sequence $(x_i)_{i=1}^k$ is less than $2^{n-3}$,then
	$$\sum_{i=1}^{k}\theta(n,2^{n-1},x_i)\ge \min\{\sum_{i=1}^{k}\theta(n,2^{n-1},x_i-1),\sum_{i=1}^{k}\theta(n,2^{n-1},x_i+1) \}.$$	
\end{lem}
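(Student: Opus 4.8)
The plan is to make the whole statement collapse onto the exact identity of Lemma \ref{key}, applied to each term of the sequence, followed by a one-line averaging argument. For each $i$ write $x_i = 2m_i+1$ with $m_i \ge 0$; the hypothesis $0 < x_i < 2^{n-3}$ forces $0 \le m_i \le 2^{n-4}-1$ (and is vacuous when $n=3$, since no positive odd number is smaller than $2^0=1$, so I may assume $n\ge 4$). For $m_i \ge 1$ Lemma \ref{key} applies directly and gives
\[
2\theta(n,2^{n-1},x_i) = \theta(n,2^{n-1},x_i-1) + \theta(n,2^{n-1},x_i+1) + 4 .
\]

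First I would dispose of the single boundary value $x_i=1$, which Lemma \ref{key} does not literally cover because it assumes $k>0$. This case is genuinely needed: when $n=4$ we have $2^{n-3}=2$, so $x_i=1$ is the \emph{only} admissible value and the entire lemma rests on it. Tracing the proof of Lemma \ref{key}, its reduction via \eqref{small} and \eqref{theta(k)} rests on \eqref{key1}, and I would simply note that \eqref{key1} remains valid at the missing index $k=0$, since $E(0)=E(1)=0$ and $E(2)=1$ give $2E(1)=0=E(0)+E(2)-1$. Hence the displayed termwise identity holds for every admissible odd $x_i$, the value $x_i=1$ included.

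Then I would sum the identity over $i=1,\dots,k$. Writing $M=\sum_i \theta(n,2^{n-1},x_i)$, $L=\sum_i \theta(n,2^{n-1},x_i-1)$, and $R=\sum_i \theta(n,2^{n-1},x_i+1)$, the summed identity reads $2M = L+R+4k$. Since $L+R \ge 2\min\{L,R\}$ and $4k\ge 0$, this yields
\[
M \ge \min\{L,R\} + 2k \ge \min\{L,R\},
\]
which is exactly the claimed inequality (and even the strict form, as $k\ge 1$).

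I do not expect a serious obstacle: once Lemma \ref{key} is in hand the argument is purely arithmetic, the averaging step being immediate from the nonnegativity of $4k$. The only point requiring care is the boundary term $x_i=1$ discussed above, where one must step back into the recurrence \eqref{key1} rather than quote Lemma \ref{key} verbatim; I would state this extension explicitly so that the termwise identity covers the full range of admissible odd values.
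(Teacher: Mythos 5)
Your proof is correct and follows essentially the same route as the paper: apply the termwise identity of Lemma \ref{key} to each $x_i$, sum, and bound the average of the two resulting sums by their minimum. The only difference is that you explicitly verify the boundary value $x_i=1$ (excluded from Lemma \ref{key} as stated, which requires $k>0$) by checking \eqref{key1} at $k=0$; the paper invokes Lemma \ref{key} there without comment, so this is a small but genuine tightening of the argument.
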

\begin{proof}
	By Lemma \ref{key}, we have
$$\begin{array}{cl}\label{odd2even}
\sum_{i=1}^{k}\theta(n,2^{n-1},x_i)
&\ge\sum_{i=1}^{k}\frac{1}{2}(\theta(n,2^{n-1},x_i-1)
+\theta(n,2^{n-1},x_i+1))\vspace{1.5ex}\\
&=\frac{1}{2}(\sum_{i=1}^{k}\theta(n,2^{n-1},x_i-1)
+\sum_{i=1}^{k}\theta(n,2^{n-1},x_i+1))\vspace{1.5ex}\\
&\ge \min\{\sum_{i=1}^{k}\theta(n,2^{n-1},x_i-1),\sum_{i=1}^{k}\theta(n,2^{n-1},x_i+1)\}.
\end{array}$$	
\end{proof}
	
\begin{exam}
	Letting $(x_i)_{i=1}^{10}=\{5,9,3,13,13,13,9,7,5,9\}$. 
	 It is direct to check that
$\sum_{i=1}^{10}\theta(7,2^6,x_i)=996$, and
$\sum_{i=1}^{10}\theta(7,2^6,x_i-1)=952$,
$\sum_{i=1}^{10}\theta(7,2^6,x_i+1)=1000$.
 So,$996\ge 952$. Lemma holds!
\end{exam}

In special, when the type sequence consists of  a series of  constitutive odd numbers with maximum $2^{n-3}-1$, we have 
\begin{lem}\label{sometou}
		For $ 0\le k_1 \le 2^{n-4}-1$ with $n\ge 4$, 
		$$\sum_{k=k_1}^{2^{n-4}-1}\theta(n,2^{n-1},2k+1)\ge \sum_{k=k_1}^{2^{n-4}-1}\theta(n,2^{n-1},2k+2).$$	
	\end{lem}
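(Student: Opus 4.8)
The plan is to prove the inequality
$$\sum_{k=k_1}^{2^{n-4}-1}\theta(n,2^{n-1},2k+1)\ge \sum_{k=k_1}^{2^{n-4}-1}\theta(n,2^{n-1},2k+2)$$
by reducing it, via \eqref{small}, to the corresponding statement about $\theta(n-2,\cdot)$ evaluated at consecutive integers, and then to extract from Lemma \ref{key} a clean ``telescoping'' identity. Writing $m=n-2$, formula \eqref{small} gives $\theta(n,2^{n-1},t)=2\theta(m,t)+2^{n-1}$ for $t$ in the relevant small-type range; since both sides of the claimed inequality range over the same number of indices $k$, the additive constant $2^{n-1}$ contributes equally and cancels. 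Thus it suffices to show
$$\sum_{k=k_1}^{2^{n-4}-1}\theta(m,2k+1)\ge \sum_{k=k_1}^{2^{n-4}-1}\theta(m,2k+2),$$
and here I would work directly with the explicit formula $\theta(m,j)=m\cdot j-2E(j)$ from \eqref{theta(k)} together with the $E(k)$ recurrences of Lemma \ref{recurrenceE(k)}.

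The core computation is to understand the sign of each term $\theta(m,2k+1)-\theta(m,2k+2)$. Using \eqref{theta(k)} this difference equals $-m-2\bigl(E(2k+1)-E(2k+2)\bigr)$, and by Lemma \ref{recurrenceE(k)}(ii) we have $E(2k+2)-E(2k+1)=N+1$, where $N=N(k)$ is the number of ones in the binary expansion of $k$. Hence
$$\theta(m,2k+1)-\theta(m,2k+2)=-m+2(N+1)=2N+2-m.$$
Individually these terms can be negative, so the inequality cannot hold term-by-term; the summation over the full dyadic block $k_1\le k\le 2^{n-4}-1$ is essential. The plan is therefore to sum this expression and show the total is nonnegative:
$$\sum_{k=k_1}^{2^{n-4}-1}\bigl(\theta(m,2k+1)-\theta(m,2k+2)\bigr)=\sum_{k=k_1}^{2^{n-4}-1}\bigl(2N(k)+2-m\bigr).$$

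The main obstacle, and the heart of the argument, will be evaluating (or lower-bounding) the digit-sum term $\sum_k 2N(k)$ over the range. I expect to exploit that the block $[k_1,2^{n-4}-1]$ has a recursive dyadic structure: the sum of binary-digit counts over a full interval $[0,2^r-1]$ is the familiar $r\,2^{r-1}$, and I would either invoke this closed form directly when $k_1=0$, or handle general $k_1$ by peeling off the top bit and inducting on $n$ (equivalently, splitting the range at a power of two). An alternative and probably cleaner route is to avoid the digit-sum entirely by telescoping: Lemma \ref{key} can be rewritten as $\theta(m,2k+2)-\theta(m,2k+1)=\theta(m,2k+1)-\theta(m,2k)-1$, which expresses each gap in terms of the neighbouring gap, so that the alternating sum of the $\theta(m,j)$ over the interval collapses to boundary terms at $j=2k_1$ and $j=2^{n-3}$. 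I would then check the resulting boundary inequality, using that $2^{n-3}$ is the top of the small-type range where $\theta$ attains its maximal value, which forces the correct sign. The risk to watch is the edge case $k_1=0$ and the degenerate behaviour of $N(0)$ and $E(0)$, which I would verify separately as a base case.
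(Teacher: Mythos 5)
Your primary (digit-sum) route is genuinely different from the paper's and is viable, but it stops exactly at its crux. The reduction is correct: by \eqref{theta(k)} and Lemma \ref{recurrenceE(k)}(ii), $\theta(n-2,2k+1)-\theta(n-2,2k+2)=2N(k)+2-(n-2)$ with $N(k)$ the number of ones in the binary expansion of $k$, and after the constants from \eqref{small} cancel, the lemma is equivalent to the tail digit-sum bound $\sum_{k=k_1}^{2^{r}-1}N(k)\ge \tfrac{r}{2}(2^{r}-k_1)$ with $r=n-4$. That bound is true and the induction you gesture at does close: if $k_1\ge 2^{r-1}$, strip the leading bit to get $(2^r-k_1)+\sum_{j=k_1-2^{r-1}}^{2^{r-1}-1}N(j)\ge\tfrac{r+1}{2}(2^r-k_1)$; if $k_1<2^{r-1}$, split at $2^{r-1}$, use $\sum_{k=2^{r-1}}^{2^r-1}N(k)=\tfrac{r+1}{2}2^{r-1}$ together with the inductive bound on $[k_1,2^{r-1}-1]$, and the surplus $2^{r-1}\ge\tfrac12(2^r-k_1)$ finishes the step. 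Since this inequality \emph{is} the entire content of the lemma and you only name the strategy rather than carry it out, the proof as written is incomplete.

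Your ``alternative route'' is essentially the paper's proof: sum Lemma \ref{key} over $k$, shift the index in $\sum_k\theta(n,2^{n-1},2k)$, and the difference of the two sides collapses to $\tfrac12\bigl(\theta(n,2^{n-1},2k_1)-\theta(n,2^{n-1},2^{n-3})\bigr)+2(2^{n-4}-k_1)$. But your proposed finish is wrong on two counts. First, the rewritten gap relation should be $\theta(m,2k+2)-\theta(m,2k+1)=\theta(m,2k+1)-\theta(m,2k)-2$, not $-1$. Second, and more seriously, $\theta(n,2^{n-1},t)$ is \emph{not} maximal at $t=2^{n-3}$ over the small-type range --- e.g.\ $\theta(6,2^5,6)=52>48=\theta(6,2^5,8)$, as the paper's own example after Lemma \ref{general} records --- and even if it were, maximality would give $\theta(n,2^{n-1},2^{n-3})-\theta(n,2^{n-1},2k_1)\ge0$, whereas what you need is the \emph{upper} bound $\le 2^{n-2}-4k_1$; the sign of the boundary difference alone cannot force the conclusion. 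The paper instead evaluates the leftover exactly: by \eqref{small} it equals $\theta(n-2,2k_1)-\theta(n-2,2^{n-3})+2^{n-3}-2k_1$, and since $\theta(n-2,2^{n-3})=2^{n-3}$ and $\theta(n-2,2k_1)-2k_1=\theta(n-3,2k_1)$ by \eqref{recurred}, the whole surplus is $\theta(n-3,2k_1)\ge0$. So either complete the digit-sum induction above, or replace the maximality heuristic by this exact identification of the boundary term.
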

\begin{proof}
	By Lemma \ref{key}, we have
	$$\begin{array}{cl}
	&\quad\sum_{k=k_1}^{2^{n-4}-1}\theta(n,2^{n-1},2k+1)\vspace{1.5ex}\\
	&=\sum_{k=k_1}^{2^{n-4}-2}\theta(n,2^{n-1},2k+2)+\frac{1}{2}(\theta(n,2^{n-1},2k_1)+\theta(n,2^{n-1},2^{n-3}))\vspace{1.5ex}\\
	&\quad+2(2^{n-4}-1-k_1+1)\vspace{1.5ex}\\
	&=\sum_{k=k_1}^{2^{n-4}-1}\theta(n,2^{n-1},2k+2)+\frac{1}{2}(\theta(n,2^{n-1},2k_1)-\theta(n,2^{n-1},2^{n-3}))\vspace{1.5ex}\\
	&\quad+2(2^{n-4}-k_1)\vspace{1.5ex}\\
	&=\sum_{k=k_1}^{2^{n-4}-1}\theta(n,2^{n-1},2k+2)+
	(\theta(n-2,2k_1)-\theta(n-2,2^{n-3}))\vspace{1.5ex}\\
	&\quad+2^{n-3}-2k_1\vspace{1.5ex}\\
	&=\sum_{k=k_1}^{2^{n-4}-1}\theta(n,2^{n-1},2k+2)+\theta(n-3,2k_1),
	\end{array}$$
		where the third and last equality are due to formula \eqref{small}, and \eqref{recurred}, respectively.
		Since $2k_1<2^{n-3}$,\ $\theta(n-3,2k_1)\ge 0$ and the lemma follows.	
	\end{proof}

\begin{exam}
	It is direct to check that
	$\sum_{k=2}^{7}\theta(7,2^6,2k+1)=608\ge 600=\sum_{k=2}^{7}\theta(7,2^6,2k+2)$,
	although $\sum_{k=2}^{7}\theta(7,2^6,2k+2)=600> 592=\sum_{k=2}^{7}\theta(7,2^6,2k)$.
	  Lemma holds!
\end{exam}
If the type sequence consists of  a series of  constitutive odd numbers without  $2^{n-3}-1$, then it  also can be dealt with in the way used in Lemma \ref{general}.
%%%%%%%%%%%%%%%%%%%%%%%%%%%%%%%%%%%%%%%%%%%%%%%%%%%%%%%%%%%
With above preparations, we have the following proposition.
\begin{prop}\label{lemodd}
	For any $n\ge 2$, $n_1,n_2\ge1,\ n_1+n_2=n$.
	For any  integer sequence $(x_i)_{i=1}^{2^{n_1-1}}$ with CONDITION $C_{n,n_2}$, then there exists an  even sequence $(x_i^\ast)_{i=1}^{2^{n_1-1}}$ with CONDITION $C_{n,n_2}$
	 such that
\begin{equation}\label{deleteodd}
	\sum_{i=1}^{2^{n_1-1}}\theta(n,2^{n-1},x_i)\ge
	\sum_{i=1}^{2^{n_1-1}}\theta(n,2^{n-1},x_i^\ast).
	\end{equation}
\end{prop}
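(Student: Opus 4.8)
The plan is to build $(x_i^\ast)$ by rounding each odd entry of $(x_i)$ to an adjacent even value, choosing the direction so that CONDITION $C_{n,n_2}$ survives and $\sum_i\theta(n,2^{n-1},x_i)$ does not grow. By (C3) every entry lies in $[0,2^{n-3}]$, and for $n\ge 4$ the cap $2^{n-3}$ is even, so the odd entries take values in $\{1,3,\dots,2^{n-3}-1\}$, all strictly below $2^{n-3}$; this is exactly the regime in which Lemma~\ref{general} applies. The genuinely small cases ($n\le 3$, and $n_1=1$ where the sequence has a single term) I would dispose of by direct inspection. I would then group the odd entries into maximal \emph{runs} of consecutive positions whose entries are all odd, noting that distinct runs are separated by at least one even, hence unchanged, position.

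The first routine step is to check that a uniform $\pm1$ shift of a single run preserves (C1) and (C3). Within a run all entries share parity, so their mutual differences are even and at most $2^{n_2}$, and are unchanged by a common shift; at a run boundary the neighbour is even, so the odd/even gap is odd and therefore at most $2^{n_2}-1$, and a $\pm1$ shift leaves it at most $2^{n_2}$. In particular the chosen direction of one run never interacts with another, so the runs may be treated independently for continuity, and (C3) is clear since $1$ rounds into $\{0,2\}$ and $2^{n-3}-1$ rounds into $\{2^{n-3}-2,\,2^{n-3}\}$.

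For the $\theta$-sum I would apply Lemma~\ref{general} run by run: the cheaper of the all-down and the all-up shift of a run never increases that run's contribution. The only remaining coupling is (C2), which demands at least two surviving entries $\ge 2^{n-3}-2^{n_2-1}$. When $n_2\ge 2$ this threshold is even, so every large entry is either even (hence fixed) or odd and \emph{strictly} above the threshold (hence still $\ge$ the threshold after a $-1$ shift); thus (C2) persists under whichever direction Lemma~\ref{general} prefers, and the construction is complete in this range.

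The delicate case, and the main obstacle, is $n_2=1$, where the threshold coincides with the maximal odd value $2^{n-3}-1$: a C2-critical entry may sit exactly at $2^{n-3}-1$, so rounding it \emph{down} would destroy (C2) and the whole top-reaching run is \emph{forced upward}. An upward shift always preserves (C2) since entries only increase, but Lemma~\ref{general} does not by itself certify that this forced direction is the cheaper one, and one cannot instead round only the topmost values up, as the level-$2$ continuity of the $n_2=1$ sequence would then be broken against a down-shifted neighbour. This is precisely what Lemma~\ref{sometou} addresses: shifting the top block of consecutive odd values $2k_1+1,\dots,2^{n-3}-1$ each up by one does not increase the total (it decreases it by $\theta(n-3,2k_1)\ge 0$). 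The crux is therefore to reconcile the actual multiset of a forced top-reaching run with the consecutive form of Lemma~\ref{sometou}, bounding its upward cost through the pointwise identity of Lemma~\ref{key} and the telescoping of Lemma~\ref{sometou}; rounding every other run in its cheaper direction by Lemma~\ref{general} then yields an even sequence $(x_i^\ast)$ still satisfying (C1)--(C3), i.e.\ CONDITION $C_{n,n_2}$, with $\sum_i\theta(n,2^{n-1},x_i)\ge\sum_i\theta(n,2^{n-1},x_i^\ast)$.
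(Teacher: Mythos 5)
Your proposal follows essentially the same route as the paper's proof: the same split into $n_2\ge 2$ versus $n_2=1$, the same run-by-run rounding with Lemma \ref{general}, the same parity observation that the (C2) threshold $2^{n-3}-2^{n_2-1}$ is even exactly when $n_2\ge 2$ (so whichever direction Lemma \ref{general} prefers preserves (C2)), and the same appeal to Lemma \ref{sometou} for the runs that are forced upward when $n_2=1$. You also correctly note that the whole top-reaching run, not just its top values, must be shifted up to keep continuity against a down-shifted neighbour; this is the paper's ``longest monotone run'' selection and Example \ref{eg}. The one step you leave unexecuted is the one you call the crux: why a forced top-reaching run actually has the consecutive-odd form required by Lemma \ref{sometou}. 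This does go through, and the reason should be made explicit: for $n_2=1$ the sequence is $1$-level circular continuous, so adjacent entries differ by at most $2$; within an all-odd run the differences are even, hence $0$ or $\pm 2$, so a \emph{strictly monotone} odd run ending at $2^{n-3}-1$ is exactly the progression $2k_1+1,2k_1+3,\dots,2^{n-3}-1$, which is literally the sum telescoped in Lemma \ref{sometou} (non-strict runs are first trimmed to their strictly monotone parts, the plateaus being handled by Lemma \ref{general}). One further caveat, which applies to the paper as well: the existence, next to a (C2)-critical entry equal to $2^{n-3}-1$, of a neighbour that is also $\ge 2^{n-3}-1$ (the paper's property $(\mathcal{U})$) is not a formal consequence of CONDITION $C_{n,1}$ alone but is imported from the provenance of the sequence as a subsample of a $0$-level continuous type sequence; if you state the proposition for arbitrary sequences with CONDITION $C_{n,1}$, this point needs separate justification. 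With the consecutive-odd observation supplied, your argument coincides with the paper's.
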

	
\begin{proof}
We discuss $(x_i)_{i=1}^{2^{n_1-1}}$ in following two cases.

\textbf{Case 1:} $n_2>1$. 
Construct a new sequence $(x_i^{(1)})_{i=1}^{2^{n_1-1}}$. 
Define $x_i^{(1)}$ to be $x_i$ if $x_i$ is even for each $1\le i \le 2^{n_1-1}$. Then,
observe all the odd part sequences between  two even in $(x_i)_{i=1}^{2^{n_1-1}}$. 
WLOG, only consider such an odd part sequence $(x_i)_{i=k_1}^{k_2}$ for $1\le k_1 \le k_2\le 2^{n_1-1}$, the other odds are same. 
For $k_1 \le i \le k_2$, define $x_i^{(1)}$ to be $x_i -1$ if
$$\sum_{i=k_1}^{k_2}\theta(n,2^{n-1},x_i-1)
\le \sum_{i=k_1}^{k_2}\theta(n,2^{n-1},x_i+1)$$  
and $x_i+1$ otherwise.
By Lemma \ref{general}, we have
\begin{equation}\label{case1}
	\sum_{i=1}^{2^{n_1-1}}\theta(n,2^{n-1},x_i)\ge \sum_{i=1}^{2^{n_1-1}}\theta(n,2^{n-1},x_i^{(1)}).
\end{equation}
It is direct to see that  $(x_i^{(1)})_{i=1}^{2^{n_1-1}}$ is an even sequence. 
Next, we study the properties of sequence $(x_i^{(1)})_{i=1}^{2^{n_1-1}}$.

\textbf{claim 1.1}. $(x_i^{(1)})_{i=1}^{2^{n_1-1}}$ is $n_2-$level\ circular\ continuous.

$\vartriangleleft$.  WLOG, assume $|x_1-x_2|\le 2^{n_2}$. 

 $\bullet\quad$For even $x_1$,$x_2$ or odd $x_1$,$x_2$. Then
$$|x_1^{(1)}-x_2^{(1)}|=|x_1-x_2|\le 2^{n_2}.$$ 

$\bullet\quad$For even $x_1$ and odd $x_2$. Then $|x_1-x_2|<2^{n_2}$,
$$|x_1^{(1)}-x_2^{(1)}|=|x_1-(x_2\pm1)|\le |x_1-x_2|+1\le2^{n_2}.$$
\hfill$\vartriangleright$

\textbf{claim 1.2}.There exist at least two $1\le i \le 2^{n_1-1}$ such that $x_i^{(1)}\ge2^{n-3}-2^{n_2-1}$.

$\vartriangleleft$. WLOG,  assume $x_1\ge 2^{n-3}-2^{n_2-1}$.

$\bullet\quad$For even $x_{1}\ge 2^{n-3}-2^{n_2-1}$. Then $$x_1^{(1)}=x_1\ge 2^{n-3}-2^{n_2-1}.$$

$\bullet\quad$For odd $x_{1}> 2^{n-3}-2^{n_2-1}$. Then
$$x_1^{(1)}\ge x_{1}-1\ge2^{n-3}-2^{n_2-1}.$$
\hfill$\vartriangleright$

\textbf{claim 1.3}. All the entries of $(x_i^{(1)})_{i=1}^{2^{n_1-1}}$ are less than or equal to $2^{n-3}$.

$\vartriangleleft$. WLOG, assume $x_1\le 2^{n-3}$. 

$\bullet\quad$For even $x_1\le 2^{n-3}$. Then
$$x_1^{(1)}=x_1\le 2^{n-3}.$$

$\bullet\quad$For odd $x_1 < 2^{n-3}$. Then
$$x_1^{(1)}\le x_1+1\le 2^{n-3}.$$
%Notice that $x_1< 2^{n-3}$.
\hfill$\vartriangleright$

Summing up the discussions in claim 1.1--1.3, we conclude  that even sequence $(x_i^{(1)})_{i=1}^{2^{n_1-1}}$ is of CONDITION $C_{n,n_2}$.(See $(s_i^{(2)})_{i=1}^{16}$ in Table \ref{table1}). Together with \eqref{case1},
the result follows if $x_i^\ast=x_i^{(1)}$ for $1\le i \le 2^{n_1-1} $.
 
\textbf{Case 2:} $n_2=1$.
That is a sequence $(x_i)_{i=1}^{2^{n-2}}$ with CONDITION $C_{n,1}$.
It is worth mentioning that CONDITION $C_{n,1}$ is induced from the properties of sequence  $(t_{2i-1}^{(1)})_{i=1}^{2^{n-2}}$ or $(t_i^{(1)})_{i=1}^{2^{n-1}}$.
Recall that the sequence  $(t_i^{(1)})_{i=1}^{2^{n-1}}$ is of CONDITION $C_{n,0}$. Notice that if $t_2^{(1)}= 2^{n-3}$, then $t_1^{(1)},\ t_3^{(1)} \ge 2^{n-3}-1$.
Based on this fact, we consider the sequence $(x_i)_{i=1}^{2^{n-2}}$ with CONDITION $C_{n,1}$. If there are at least two elements equal to $2^{n-3}$, then we go to \emph{step 2} directly.
In the following,  we devote to working on the problem that
there is at most one element equal to $2^{n-3}$ in $(x_i)_{i=1}^{2^{n-2}}$.
If there is an unique element equal to $2^{n-3}$, then there must be one element, such as $x_j$, satisfying  an additional property $(\mathcal{U})$, where 

$(\mathcal{U})$. $x_j=2^{n-3}-1$, and
$\max\{x_{j-1}, x_{j+1}\}\ge 2^{n-3}-1$.

What's more, if all the elments  are strictly less than $2^{n-3}$ in $(x_i)_{i=1}^{2^{n-2}}$, then there must be two elements satisfiying property $(\mathcal{U})$.
Now we first consider the odd part sequences containing element satisfiying property $(\mathcal{U})$, then deal with the rest.

\emph{step 1.} Collect all the odd part sequences, each of which is strictly monotonous and contains element satisfiying property $(\mathcal{U})$ in $(x_i)_{i=1}^{2^{n-2}}$. Denote the union of such part sequences by $\Gamma$.
It's worth mentioning that we only pick those local {\it longest} odd part sequences.
WLOG, assume $(x_i)_{i=k_1}^{k_2}$ is strictly increasing for $1\le k_1 \le k_2 \le 2^{n-2}$, then $x_{k_2}=2^{n-3}-1$ and
$x_{k_1-1}\ge x_{k_1}$ if $x_{k_1-1}$ is odd.
Clearly,  we write $2^{n-2}$ instead of $k_1-1$ if $k_1=1$.(See Example \ref{eg}). 

Construct a new sequence $(x_i^{(1)})_{i=1}^{2^{n-2}}$. For $1\le i \le 2^{n-2}$, define $x_i^{(1)}$ to be $x_i+1$ if
$x_i \in \Gamma$ and $x_i$ otherwise.
 By  Lemma \ref{sometou}, we have
\begin{equation}\label{step 1}
	\sum_{i=1}^{2^{n-2}}\theta(n,2^{n-1},x_i)\ge \sum_{i=1}^{2^{n-2}}\theta(n,2^{n-1},x_i^{(1)}).
\end{equation}

Next, we study the properties of sequence $(x_i^{(1)})_{i=1}^{2^{n-2}}$.

\textbf{claim 2.1}. $(x_i^{(1)})_{i=1}^{2^{n-2}}$ is $1-$level\ circular\ continuous.

$\vartriangleleft$. WLOG, assume $|x_1-x_2|\le 2$. 

%$\bullet\quad$For $x_1,x_2 \in \bigcup_{j=1}^m \gamma_j$ or $x_1,x_2 \notin \bigcup_{j=1}^m \gamma_j$. Then
$\bullet\quad$For $x_1,x_2 \in \Gamma$ or $x_1,x_2 \notin \Gamma$. Then
$$|x_1^{(1)}-x_2^{(1)}|=|x_1-x_2|\le 2.$$  

$\bullet\quad$For $x_1 \notin \Gamma$, $x_2 \in \Gamma$. Then 
$$|x_1^{(1)}-x_2^{(1)}|=|x_1-(x_2+1)|=\left\{
\begin{array}{ccl}
&\le|x_1-x_2|+1=2,& \mbox{if }\	x_1\ \text{is\ even},\vspace{1.5ex}\\
&=|(x_1-x_2)-1|=1,& \mbox{if }\	x_1\ \text{is\ odd}.
\end{array}
\right.$$
Notice that $x_1\ge x_2$ when $x_1$ is odd.
If $x_2=2^{n-3}-1$, then  $x_1=2^{n-3}-1$ by property $(\mathcal{U})$, thus $x_1\in \Gamma$.
So, $x_2<2^{n-3}-1$, and $x_1\ge x_2$ by the statement of {\it longest}.

\hfill$\vartriangleright$

\textbf{claim 2.2}.There exist at least two $1\le i \le 2^{n-2}$ such that $x_i^{(1)}\ge2^{n-3}$.

$\vartriangleleft$.   WLOG, assume $x_1\ge2^{n-3}-1$ .

$\bullet\quad$For even $x_1>2^{n-3}-1$. Then 
$$x_1^{(1)}=x_1\ge2^{n-3}.$$

$\bullet\quad$For odd $x_1\ge2^{n-3}-1$. Then
$$x_1^{(1)}=x_1+1\ge2^{n-3}.$$
\hfill$\vartriangleright$

\textbf{claim 2.3}. All the entries of $(x_i^{(1)})_{i=1}^{2^{n-2}}$ are less than or equal to $2^{n-3}$.

$\vartriangleleft$. WLOG, assume $x_1\le 2^{n-3}$. 

$\bullet\quad$For $x_1\notin \Gamma$. Then
$$x_1^{(1)}=x_1\le 2^{n-3}.$$

$\bullet\quad$For $x_1\in \Gamma$. Then odd $x_1<2^{n-3}$,
$$x_1^{(1)}=x_1+1 \le 2^{n-3}.$$
\hfill$\vartriangleright$

Summing up the discussions in claim 2.1--2.3, we conclude  that sequence $(x_i^{(1)})_{i=1}^{2^{n-2}}$  is of CONDITION $C_{n,1}$ and contains at least two $2^{n-3}$.(See $(s_i^{(4)})_{i=1}^{16}$ in Table \ref{table1}).
%Furthermore, there isn't $2^{n-3}-1$ in  $(x_i^{(1)})_{i=1}^{2^{n-2}}$, thus the following step is  similar to  \textbf{Case 1}.

\emph{step 2.} Pick all the odd part sequences between  two even in $(x_i^{(1)})_{i=1}^{2^{n-2}}$.
Following the same procedure in \textbf{Case 1}, an even sequence $(x_i^{(2)})_{i=1}^{2^{n-2}}$ is obtained.
 By  Lemma \ref{general}, we have
\begin{equation}\label{step 2}
	\sum_{i=1}^{2^{n-2}}\theta(n,2^{n-1},x_i^{(1)})\ge \sum_{i=1}^{2^{n-2}}\theta(n,2^{n-1},x_i^{(2)}).
\end{equation}

 By analogous proof of claim 1.1--1.3, we can show that even sequence $(x_i^{(2)})_{i=1}^{2^{n-2}}$ is of  CONDITION $C_{n,1}$. (See $(s_i^{(5)})_{i=1}^{16}$ in Table \ref{table1}).
 For each $1\le i \le 2^{n-2}$, letting $x_i^\ast=x_i^{(2)}$, then inequality \eqref{deleteodd} holds by \eqref{step 1} \eqref{step 2}. 
Combining these results with \textbf{Case 1} and \textbf{Case 2}, the proposition follows.  
\end{proof}

\begin{exam}\label{eg}
	For $n=7$, given a sequence $\{\ast \ast, 9,9,11,13,15,\ast \ast \}$ or $\{11,13,15,\ast \ast,9,9 \}$, then pick the longest part $\{9,11,13,15\}$ in step 1. If pick $\{11,13,15\}$, then $\{9,12,14,16\}$ doesnot maintain 1-level continous in step 1.
\end{exam}	

%%%%%%%%%%%%%%%%%%%%%%%%%%%%%%%%%%%%%%%%%
Now, we go back to the proof of Theorem \ref{cpthm1}.
\begin{proof}
It is seen in \eqref{eq1} that $(s_i^{(1)})_{i=1}^{2^{n_1-1}}$ is of CONDITION $C_{n,n_2}$.
By Proposition \ref{lemodd} and \eqref{double2}, there there exists an even sequence $(s_i^{(2)})_{i=1}^{2^{n_1-1}}$ with CONDITION $C_{n,n_2}$ such that
$$\sum_{i=1}^{2^{n_1-1}}\theta(n,2^{n-1},s_i^{(1)})\ge
\sum_{i=1}^{2^{n_1-1}}\theta(n,2^{n-1},s_i^{(2)})
=\sum_{i=1}^{2^{n_1-1}}2\theta(n-1,2^{n-2},s_i^{(3)}),$$
where $s_i^{(3)}=s_i^{(2)}/2$ for $1\le i \le 2^{n_1-1}$ . By Lemma \ref{halfseq},
the sequence $(s_i^{(3)})_{i=1}^{2^{n_1-1}}$ is of CONDITION $C_{n-1,n_2-1}$.

Using Proposition \ref{lemodd}, \eqref{double2} and Lemma \ref{halfseq} repeatedly, there exists a sequence $(s_i^{\ast})_{i=1}^{2^{n_1-1}}$ with CONDITION $C_{n_1,0}$, such that
$$\sum_{i=1}^{2^{n_1-1}}\theta(n,2^{n-1},s_i^{(1)})\ge
2^{n_2}\sum_{i=1}^{2^{n_1-1}}\theta(n_1,2^{n_1-1},s_i^{\ast}).$$

It is known in \cite{Guu,Liu2021} that any sequence $(t_i^{(1)})_{i=1}^{2^{n-1}}$ with CONDITION $C_{n,0}$,  
\begin{equation*}\label{cycle}
	\sum_{i=1}^{2^{n-1}}\theta(n,2^{n-1},t_i^{(1)}) \ge \sum_{i=1}^{2^{n-1}}\theta(n,2^{n-1},g_i^{n}). 
\end{equation*}
Thus,
$$\sum_{i=1}^{2^{n_1-1}}\theta(n,2^{n-1},s_i^{(1)})\ge
2^{n_2}\sum_{i=1}^{2^{n_1-1}}\theta(n_1,2^{n_1-1},s_i^{\ast})\ge 2^{n_2}\sum_{i=1}^{2^{n_1-1}}\theta(n_1,2^{n_1-1},g_i^{n_1})
$$
Together with \eqref{eq1} and \eqref{gi-n2}, we have
$$\sum_{i=1}^{2^{n_1-1}} \theta(n,f^{-1}(A_i))
\ge
\sum_{i=1}^{2^{n_1-1}} \theta(n,\xi_n^{-1}(A_i)).$$
Hence Theorem \ref{cpthm1} is proved!
\end{proof}

In the following, we will present an example to explain Theorem \ref{cpthm1} more clearly.
\begin{exam}\label{examf}
Take an embedding $f:Q_7\rightarrow C_{2^5}\times P_{2^{2}}$ with $(f(v))_{v\in\{0,1\}^7}=$
$$\begin{array}{l}
(3, 2, 6, 7, 5, 4, 12, 13, 11, 9, 8, 24, 25, 27, 26, 30,31,29, 28, 20, 21,\\ 
23, 22, 18, 19, 17, 16, 48, 49, 51, 50, 54, 55, 53, 52, 60, 61, 63, 62,\\ 
58, 59, 57, 56, 40, 41, 43, 42, 46, 47, 45, 44, 36, 37, 39, 38, 34, 35, \\
33, 32, 96, 97,99, 98, 102, 103, 101, 100, 108, 109, 111, 110, 106,\\
 107,105, 104, 120, 121, 123, 15, 14, 10, 122,  126, 127, 125, 124,\\ 
116, 117,119, 118, 114, 115, 113, 112, 80, 81, 83, 82, 86, 87, 85,\\ 84,92, 93, 95, 
94, 90, 91, 89, 88, 72, 73, 75, 74, 78, 79,77, 76,\\ 68, 69, 71, 70, 66, 67, 65, 64, 0, 1),
\end{array}$$
where $f(v)$ are list in lexicographic order of $v\in\{0,1\}^7$. Based on this embedding,
for $1\le i \le 2^{4}$,  $s_i$ and $s_i^{(j)}$  are illustrated in Table \ref{table1}.
\end{exam}
\begin{table}[h]
	\centering
	\caption{Illustration of $s_i$ and $s_i^{(j)}$}
\begin{tabular}{c cccc cccc cccc cccc}
	\hline
	$i$&   1& 2& 3& 4&   5& 6&  7& 8& 9& 10& 11& 12& 13& 14& 15& 16\\ \hline
	$s_i$&        5& 9& 13& 15& 13& 10& 6& 3& 5& 9& 13& 17& 14& 10& 6&  3\\	
	$s_i^{(1)}$&  5& 9& 13& 15& 13& 10& 6& 3& 5& 9& 13& 16& 14& 10& 6&  3\\	
	$s_i^{(2)}$&  4& 8& 12& 14& 12& 10& 6& 2& 4& 8& 12& 16& 14& 10& 6&  2\\	
	$s_i^{(3)}$&  2& 4& 6&   7& 6&  5&  3& 1& 2& 4& 6&  8&  7&   5& 3&  1\\		
	$s_i^{(4)}$&  2& 4& 6&   7& 6&  5&  3& 1& 2& 4& 6&  8&  8&   6& 4&  2\\
	$s_i^{(5)}$&  2& 4& 6&   8& 6&  4&  2& 0& 2& 4& 6&  8&  8&   6& 4&  2\\
	$s_i^{(6)}$&  1& 2& 3&   4& 3&  2&  1& 0& 1& 2& 3&  4&  4&   3&  2&  1\\	
	\hline
\end{tabular}
	\label{table1}	
\end{table}

Observe Table \ref{table1}, $n=7, n_1=5, n_2=2$.
 
Firstly, since $2^{n-3}=16$, let $s_{12}^{(1)}=16$ instead of $s_{12}=17$. Others stay still.
It is direct to check that the sequence $(s_i^{(1)})_{i=1}^{16}$ is of
CONDITION $C_{7,2}$. %, where $2^{n-3}-2^{n_2-1}=14$.
Following the procedure in \textbf{Case 1}  of Proposition \ref{lemodd}, 
$$\begin{array}{cl}
	&\theta(7,2^{6},s_{16}^{(1)}-1)+
	\sum_{i=1}^{5}\theta(7,2^{6},s_i^{(1)}-1)=576\vspace{1.5ex}\\
	<&592=\theta(7,2^{6},s_{16}^{(1)}+1)+
	\sum_{i=1}^{5}\theta(7,2^{6},s_i^{(1)}
	+1),
\end{array}
$$ $$\sum_{i=8}^{11}\theta(7,2^{6},s_i^{(1)}-1)=368<392=
\sum_{i=8}^{11}\theta(7,2^{6},s_i^{(1)}
+1),$$
then
$s_i^{(2)}=s_i^{(1)}-1$ for $i\in\{1,2,3,4,5,8,9,10,11,16\}$. Others stay still.
Thus,
even sequence $(s_i^{(2)})_{i=1}^{16}$ is of CONDITION $C_{7,2}$.
\begin{equation*}
\sum_{i=1}^{2^4} \theta(7,f^{-1}(A_i))\ge
\sum_{i=1}^{2^4}\theta(7,2^6,s_i)\ge
\sum_{i=1}^{2^4}\theta(7,2^6,s_i^{(1)})\ge
\sum_{i=1}^{2^4}\theta(7,2^6,s_i^{(2)}).
\end{equation*}

Secondly,
for $1\le i\le 2^4$, letting $s_i^{(3)}=s_i^{(2)}/2$, then sequence $(s_i^{(3)})_{i=1}^{16}$ is of CONDITION $C_{6,1}$ by Lemma \ref{halfseq}. 
Follow the procedure in \textbf{Case 2}  of Proposition \ref{lemodd}.
Step 1, since %$s_{12}^{(3)}=2^{6-3},\ s_{13}^{(3)}=2^{6-3}-1=7$,
$s_{13}^{(3)}$ satisfies property $\mathcal{U}$, then
$s_i^{(4)}=s_i^{(3)}+1$ for $i\in\{13,14,15,16\}$. Others stay still. 
Step 2,  since $\theta(6,2^5,s_4^{(4)}-1)=52>48=\theta(6,2^5,s_4^{(4)}+1)$, then $s_4^{(5)}=s_4^{(4)}+1$. 
Similarly, $s_i^{(5)}=s_i^{(4)}-1$ for $i\in\{6,7,8\}$.
Others stay still. 
Thus,
even sequence $(s_i^{(5)})_{i=1}^{16}$ is of CONDITION $C_{6,1}$.
\begin{equation*}
\sum_{i=1}^{2^4}\theta(7,2^6,s_i^{(2)})=
2\sum_{i=1}^{2^4}\theta(6,2^5,s_i^{(3)})
\ge 2\sum_{i=1}^{2^4}\theta(6,2^5,s_i^{(5)}).
\end{equation*}

Finally, for $1\le i\le 2^4$, letting $s_i^{(6)}=s_i^{(5)}/2$, then sequence $(s_i^{(6)})_{i=1}^{16}$ is of CONDITION $C_{5,0}$ by Lemma \ref{halfseq}. 
\begin{equation*}
2\sum_{i=1}^{2^4}\theta(6,2^5,s_i^{(5)})
= 2^2\sum_{i=1}^{2^4}\theta(5,2^4,s_i^{(6)})
\ge
2^2\sum_{i=1}^{2^4}\theta(5,2^4,g_i^{5})=
\sum_{i=1}^{2^4}\theta(7,\xi_7^{-1}(A_i)).
\end{equation*}
As mentioned above, we have
$$\sum_{i=1}^{2^4} \theta(7,f^{-1}(A_i))\ge
\sum_{i=1}^{2^4}\theta(7,\xi_7^{-1}(A_i)).$$
%%%%%%%%%%%%%%%%%%%%%%%%%		
\section{Conclusions}\

We identify an exact layout of hypercube into a cylinder. We introduce new technique related to EIP to estimate the minimum EWHC. This technique is based on CONDITION $C_{n,n_2}$, which exactly describes the properties of type sequence corresponding to some subsets of hypercube. Following this idea, the problem of embedding wirelength of hypercube into torus and related graphs is also considered\cite{Tang2021b}.

\textbf{Acknowledgements}  
The author is grateful to Prof. Qinghui Liu for his thorough review and suggestions.
This work is supported by the National Natural Science Foundation of China, No.11871098.


\begin{thebibliography}{90}	

\bibitem{AS2015}
M. Arockiaraj, A. J. Shalini, Conjectures on wirelength of hypercube into cylinder and torus, Theoretical Computer Science, 595(2015),168--171.
		
\bibitem{BCHRS}
S. L. Bezrukov, J. D. Chavez, L. H. Harper, M. R\"ottger, U. P. Schroeder, {Embedding of hypercubes into grids}, In: Brim L., Gruska J., Zlatuka J. (eds) Mathematical Foundations of Computer Science 1998, Lecture Notes in Computer Science, vol 1450, Springer, Berlin, Heidelberg,
693--701.
\bibitem{Guu}
C. J. Guu, {\it the circular wirelength problem for hypercubes}, ProQuest Dissertations and Thesis, University of California Riverside, 1997.
\bibitem{H1964}
L. H. Harper, Optimal assignments of numbers to vertices, J. SIAM 12:1 (1964), 131--135.	
\bibitem{H}
L. H. Harper, {Global methods for isoperimetric problems}, Cambridge university press, 2004.
		
\bibitem{J}
WeiXing Ji, QingHui Liu, GuiZhen Wang, ZhuoJia Shen, {Embedding of hypercube into cylinder}, 2015, arXiv:1511.07932v1.

\bibitem{Liu2021}
Q. H. Liu, Z. Y. Tang, {A note on circular wirelength for hypercubes}, 2021,  arXiv:2108.08177, under review.
		
\bibitem{MRRM}
P. Manuel, I. Rajasingh, B. Rajan, H. Mercy, { Exact wirelength of hypercube on a grid},
Discrete Applied Mathematics, 157:7(2009), 1486--1495.
	
\bibitem{TREE}	
P. Manuel, {Minimum average congestion of enhanced and augmented hypercubes into complete
binary trees}, Discrete Applied Mathematics, 159(2010), 360--366.	

\bibitem{PM}
P. Manuel, M. Arockiaraj, I. Rajasingh, B. Rajan,  {Embedding hypercubes into cylinders,snakes and caterpillars for minimizing wirelength},
	Discrete Applied Mathematics, 159(2011), 2109--2116.
	
\bibitem{RRPR}
R.S.Rajan, I. Rajasingh, N.Parthiban,  T.M. Rajalaxmi, 
A linear time algorithm for embedding hypercube into cylinder and torus, Theoretical Computer Science, 542(2014),108--115.

\bibitem{Tang2021b}
Z. Y. Tang, {Optimal embedding of hypercube into torus and related graphs}, 2021, submitted.

\bibitem{TCT}
T.H. Tsai, Y. C. Chen, J.M. Tan, {Optimal Edge Congestion of Exchanged Hypercubes}, IEEE Transactions on
Parallel and Distributed Systems, 27:1(2016), 250–262.

\bibitem{Xu}
JunMing Xu, Topological Structure and Analysis of Interconnection Networks, Kluwer Academic Publishers, 2001, pp 105.
\end{thebibliography}
\end{document}